\newcommand{\eps}{{\varepsilon}}
\newcommand{\sgn}{\operatorname{sgn}}
\newcommand{\R}{\ensuremath{\mathbb{R}}}
\newcommand{\e}{\varepsilon}
\newtheorem {theorem} {Theorem} [section]
\newtheorem {proposition}  {Proposition}[section]
\newtheorem {definition}  {Definition}[section]
\newtheorem {example} {Example}
\begin{document}

\title [Sliding on codimension 2]{Slow--fast systems and sliding on codimension 2 switching manifolds}

\author[da Silva, Nunes]
{Paulo Ricardo da Silva $^*$ and Willian Pereira Nunes $^*$ }

\address{$^*$  Departamento de Matem\'{a}tica --
IBILCE--UNESP, Rua C. Colombo, 2265, CEP 15054--000 S. J. Rio Preto,
S\~ao Paulo, Brazil}

\email{paulo.r.silva@unesp.br}
\email{willianx3@yahoo.com.br}

\subjclass[2010]{34D15, 34C45, 34C07, 34C23, 34C25.}

\keywords{singular perturbation, non--smooth systems, invariant manifolds, Hopf bifurcation,
	 Bogdanov-Takens bifurcation.}

\date{}
\maketitle

\begin{abstract}In this work we consider piecewise smooth vector fields $X$ defined in $\R^n\setminus \Sigma$, 
	where $\Sigma$ is a self-intersecting switching manifold.
  A double regularization of $X$  is a 2-parameter family  
	of smooth vector fields $X_{\e.\eta}$, $\e,\eta>0,$ satisfying that $X_{\e,\eta}$ converges pointwise to $X$ on $\R^n\setminus\Sigma$, when $\e,\eta\rightarrow 0$.  We define the sliding region on the non regular part of $\Sigma$ as a  limit of invariant manifolds  of  $X_{\e.\eta}$.
	Since the  double regularization provides a slow--fast system, the GSP-theory (geometric singular perturbation theory) is our main tool.
\end{abstract}


\section{Introduction}

One finds in real life and in various branches of science distinguished phenomena whose mathematical models are 
expressed by  piecewise smooth systems and deserve a systematic analysis. However
sometimes the treatment of such objects is far from the usual techniques or 
methodologies found in the smooth universe.   A good reference for a first reading on the subject is \cite{se}.

One of the phenomena that can occur is the existence of sliding regions in the phase space.
In this paper we discuss the definition of sliding when the switching manifold presents self-intersection.
We begin our discussion by presenting the classical definition of sliding  on a regular surface 
and the difficulties to extend to the general case.

Consider two smooth vector fields $X_+,X_-$ defined in $\R^n=\{(x_1,x_2)\in\R\times\R^{n-1}\}.$
A $1$--cross piecewise-smooth vector field  is
\[X=\dfrac{1}{2}\left[(1+\sgn(x_1))X_++(1-\sgn(x_1))X_-)\right].\]
The set of  discontinuity is the codimension 1 manifold
$\Sigma=\{f(x_1,x_2)=x_1=0\}.$ 

The trajectory of $X$ by points on $\Sigma$ depends
on the Lie derivative (\footnote{As usual, we denote  $Xf=\nabla f.X$.}), which is used to classify the points 
as \textit{sewing} or \textit{slide}:
\begin{itemize}
\item[(i)] $\Sigma_{sw} =\{x\in\Sigma:(X_+f.X_-f)(x) >0 \}$ is the sewing region;
\item[(ii)] $\Sigma_{sl} =\{x\in\Sigma:(X_+f.X_-f)(x) <0 \}$ is the sliding region.
\end{itemize}

\begin{figure}[!htb] 
	\begin{overpic}[width=4cm, angle=90]{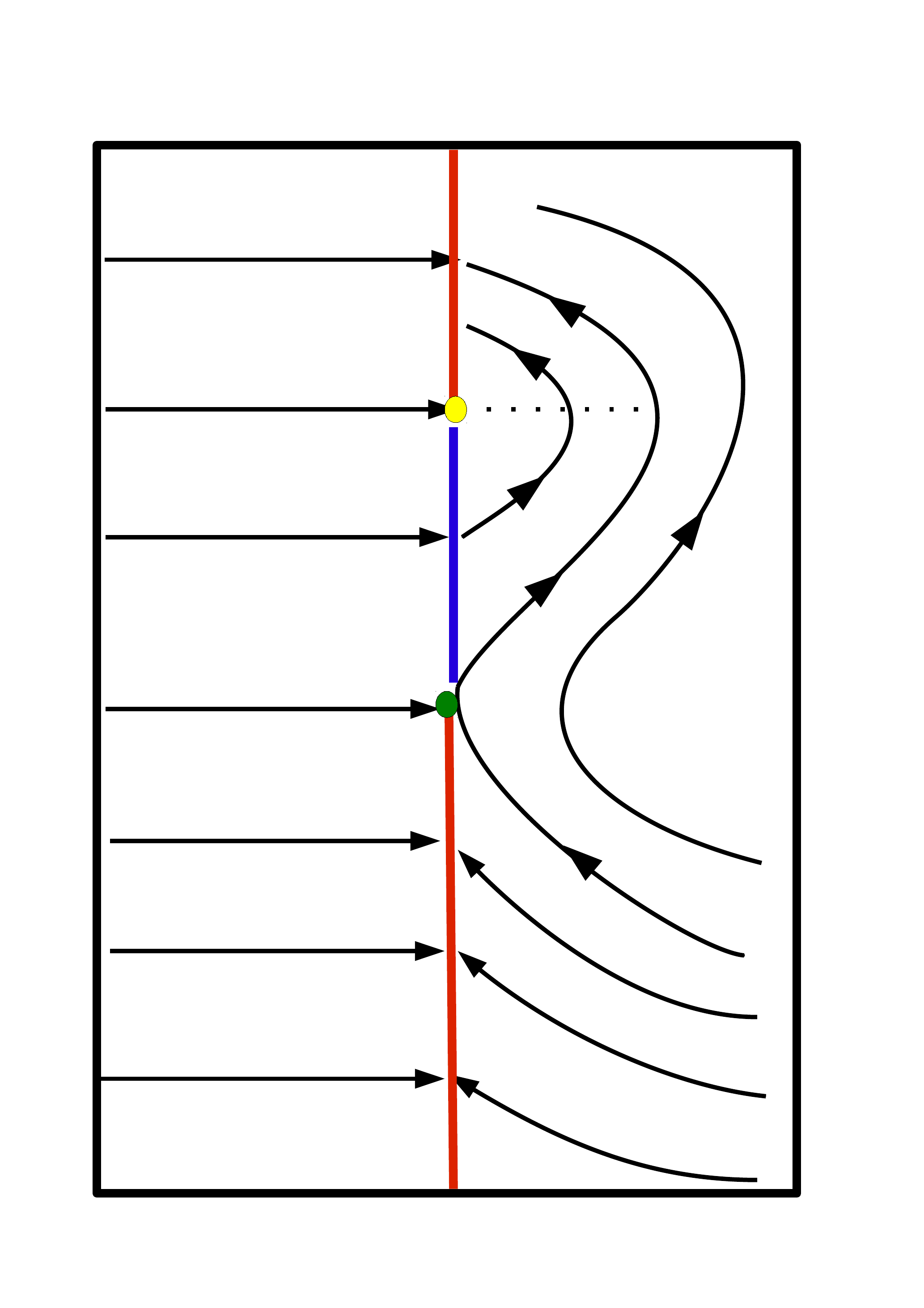}
	\end{overpic}
	\label{buNew}
	\caption{\small Sliding set $\Sigma_{sl}$ (red), sewing set  $\Sigma_{sw}$ (blue), visible fold (green) and invisible fold (yellow).}
\end{figure}

According to Filippov's convention \cite{AF}, for $x \in \Sigma_{sl}$, the trajectory remains in $\Sigma_{sl}$ and obeys
the flow of the sliding vector field $X^S:\Sigma_{sl}\rightarrow\R^n$: 

$$
X^S = (1-\rho)X_+ + \rho X_-, \quad \rho=\frac{X_+f}{X_+f-X_-f}.
$$
The sliding vector field  is a convex combination of $X_+$ and $X_-$ that belongs to the tangent bundle $T\Sigma$. 
The Filippov's convention \cite{AF} also provides first order exit conditions: whenever $\rho = 0$,
one may expect to leave $\Sigma$ to enter in $M_+=f^{-1}(0,+\infty)$ 
with vector field $X_+$, and whenever $\rho = 1$ one may expected to enter in $M_-=f^{-1}(-\infty,0)$ with vector field $X_-$. 
An example of first order exit point is a fold.

Our main tool for  studying the sliding flow  is the \textit{geometric singular perturbation theory} (\textit{GSP-theory}).
The connection between these subjects appears when we regularize the discontinuous vector field. 
A regularization  is  a family of smooth vector fields
$X_{\e}$, with  $\e\geq 0,$ satisfying that $X_{\e}$ converges uniformly to
$X$ in each compact subset of $\R^n  \backslash  \Sigma$ when $\e\rightarrow 0$. 

The Sotomayor-Teixeira regularization \cite{ST} (\textit{ST-regularization})  is based on the use of a
\textit{transition function} (\footnote{by definition, this is a $C^\infty$ function such that $\varphi(t) = -1$ 
for $t \le -1$, $\varphi(t) = 1$ for $t \ge 1$ and $\varphi'(t) >0$ for $-1 < t < 1$.})  $\varphi:\R\rightarrow\R$.
It is the $1$-parameter family $X_{\e}$ given by
\begin{equation}
X_{\e}(x)=\left(\frac{1}{2}+\frac{\varphi_\varepsilon(x_1) }{2} \right)X_+(x)+\left(\frac{1}{2}-\frac{\varphi_\varepsilon(x_1)}{2} \right)X_-(x), \label{ST}
\end{equation}
where $\varphi_{\varepsilon}(x_1)=\varphi(x_1/\varepsilon )$, for $\varepsilon >0$. 

Considering a blow-up $x_1\rightarrow x_1\e$, the trajectories of (\ref{ST}) are the solutions of  a slow-fast system
\begin{equation}
\varepsilon\dot{x}_1=\alpha(x_1, x_2, \varepsilon), \quad \dot{x_2}=\beta(x_1, x_2, \varepsilon).\label{SP}
\end{equation}
We 
can apply the GSP-theory to get information about its phase portrait for $\e\sim 0$ (see for 
instance \cite{BPT,  LST,LST2,LST3,LST4}).
On the paper \cite{LST2} one has the proof  that the reduced dynamics on the critical manifold $\mathcal{S}_0=\{\alpha (x_1,x_2,0)=0\}$ is equivalent
to the dynamics of sliding vector field on $\Sigma_{sl}$. Consequently $\Sigma_{sl}$  is the limit of invariant manifolds $\mathcal{S}_{\e}$ of 
$X_\e$. We can roughly say that the Filippov's approach and the singular perturbation approach provide
the same description of the sliding vector field on $\Sigma.$
However, when the discontinuity occurs in a set $\Sigma\subseteq M$ with 
codimension greater than $1$, the sliding region can not be defined via Filippov's convention. 
The main goal of this paper 
is to study slide on $\Sigma$ with codimension $2$. 
We refer \cite{BCS, Diec1, Diec2, Diec3, NB} for related problems and for  
an introduction to the subject. \\

\subsection{Set of discontinuity  $\Sigma\subseteq \R^n$ with codimension 2 points.}

Consider now four smooth vector fields  $X_{++},X_{-+},X_{--},X_{+-}$ defined in $\R^n=\{(x_1,x_2,x_3)\in\R\times\R\times\R^{n-2}\}$
and the  $2$--cross piecewise-smooth vector field   
\begin{equation}X=\dfrac{1}{4}\left[aX_{++}+bX_{+-}+cX_{-+}+dX_{--}\right]\label{2cruz}\end{equation} where
\[a=(1+\sgn(x_1))(1+\sgn(x_2)),\quad b=(1+\sgn(x_1))(1-\sgn(x_2)),\]
\[c=(1-\sgn(x_1))(1+\sgn(x_2)),\quad d=(1-\sgn(x_1))(1-\sgn(x_2)).\]

Denote $\Sigma_1=\{x_1=0\}\subset \R^n$ and  $\Sigma_2=\{x_2=0\}\subset \R^n$. The set
$\Sigma=\Sigma_1\cup\Sigma_2$ is  the switching manifold and the phase space is divided into four regions, denoted by

\begin{eqnarray*}
M_{++}:  \  x_1>0, x_2>0, &M_{+-}:  \ x_1>0, x_2<0,\\
M_{--}:  \  x_1<0, x_2<0, &M_{-+}:  \ x_1<0, x_2>0.
\end{eqnarray*}

We also use the following notation
\begin{equation*}
\Sigma_1^{\pm}= \left\{x_1=0, x_2 \gtrless 0 \right\}, 
\Sigma_2^{\pm}= \left\{x_2=0, x_1 \gtrless 0 \right\}.
\end{equation*}

The codimension 2 switching set is $\Sigma_{00}= \left\{x_1=x_2=0\right\}.$
Inspired in the  regular case, we try to find a sliding vector field 
as a convex combination of $X_{++},X_{-+},X_{--}$ and $X_{+-}$:
\begin{equation}
X \in \left\{ \sum_{s \in \{-,+\}^2} \lambda_s X_s, \quad \sum_{s \in \{-,+\}^2}\lambda_s=1 \right\}, \quad
X.x_i=0, i=1,2. \label{Filippov}
\end{equation}
Clearly it  is an indeterminate system. Thus the requirement of $X$ being tangent to $\Sigma_1$ and $\Sigma_2$ is not  
sufficient to  characterize a convex combination of $X_{++},X_{-+},X_{--}$ and $X_{+-}$.

In  \cite{PS} we propose  a new definition for sliding  and sewing, linked to the regularization considered. 
Our definition coincides with the definition of Filippov in the regular case. Thus, we can say that our definition generalizes the previous one.

Let $\varphi: \R \longrightarrow \R$ be a transition function. The double regularization is
\[
X_{\varepsilon, \eta}(\textbf{x})=\frac{1}{4}\left(\sum_{s} 
\left(1 +s_1 \varphi\left(\frac{x_1}{\varepsilon} \right)\right)\left(1 +s_2 
\varphi\left(\frac{x_2}{\eta} \right)\right)X_s(\textbf{x})  \right) \label{dreg}
\] where $s=(s_1,s_2)\in\{-,+\}^2$.

\begin{definition} (Slide depending on regularization $X_{\e,\eta}$.)  We say that $p \in \Sigma_{00}$ is a sliding point of \eqref{2cruz}
	if there exist an open neighborhood $U \subset \R^n$ of $p$ and a family of smooth manifolds
	$S_{\e,\eta} \subset U$ defined for all $\e,\eta>0$ such that: 
	\begin{itemize}
		\item For each $\e,\eta$, $S_{\e,\eta}$ is invariant by the restriction of $X_{\e,\eta}$ to $U$.
		\item For each compact subset $K \subset U$, the sequence $S_{\eps,\eta} \cap K$ converges to 
		$\Sigma_{00} \cap K$ when $(\e,\eta)\rightarrow (0,0)$  in some given Hausdorff metric $d_H$ on compact sets of 
		$\R^n$. 
	\end{itemize}
\end{definition}

We remark that the sliding condition is open, that is, if $p\in \Sigma_{00}$  is a sliding  point then there exists an open neighborhood
 $I\subset \Sigma_{00}$ such that any $q\in I$ is a sliding point.\\

Briefly, we list below the  results we have proved in this article. When considering $(\e, \eta) \rightarrow (0,0)$ we can obtain different limit situations depending on the iteration between the parameters. We call  \textit{regularization curve} a path
$\psi (\e,\eta)=0,$ with $\psi (0,0)=0$, in the parameter space $(\e, \eta)$.
\begin{itemize}
\item (Regularization curve $\eta=k\e$) If $\Sigma_{00}$ is the codimension 2 switching manifold then
the sliding region in  $ \Sigma_{00}$ (linked to $X_{\e,k\e}$)
is characterized by the signal of a smooth function.
See \textit{Theorem \ref{teo1}.}

\item (General regularization curve) If $\Sigma_{00}$ is the codimension 2 switching manifold and  the singular points of  $X_s$, $s \in \{-,+\}^2$, 
are not in $\Sigma$ then the sliding region in  $ \Sigma_{00}$  (linked to $X_{\e,\eta}$)
depends on the interactions between the  parameters $\varepsilon$ and $\eta$. See 
\textit{Theorems} \ref{teo_sliding} and \ref{teo_eqsliding}.\\
\end{itemize}

\begin{figure}[!htb] 
	\begin{overpic}[width=10cm]{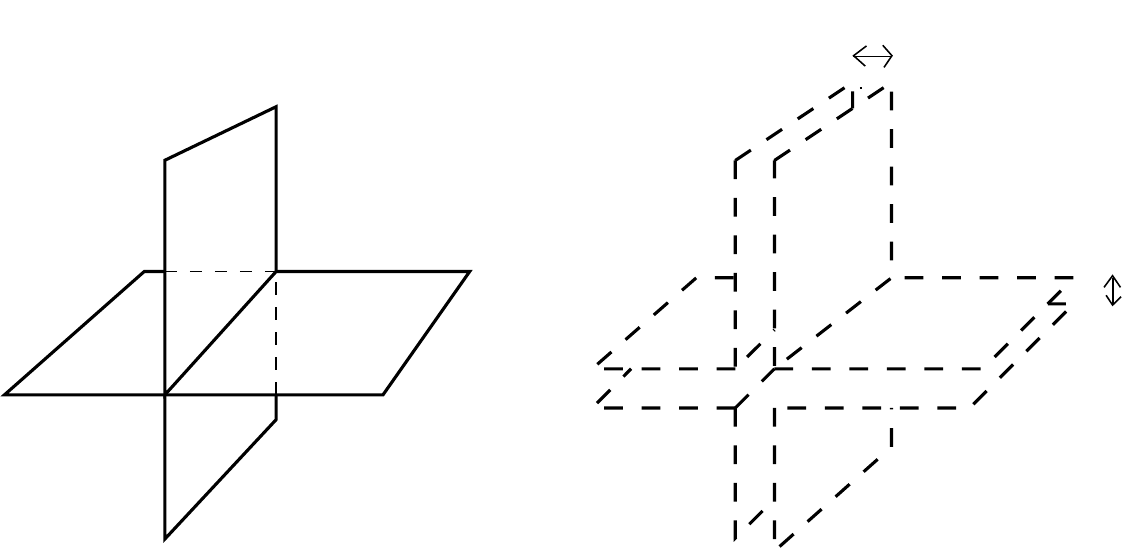}
			\put(75,48){$2\e$}\put(100,22){$2\eta$}
	\end{overpic}
	\label{buNew}
	\caption{\small A codimension 2 switching manifold $\Sigma$ and double regularization.}
\end{figure}

The paper is organized as follows. In Section \ref{s2} we give preliminary definitions and remember 
the main results of GSP-theory.
In Section \ref{s3} we consider a regularizing curve of the kind $\eta= k\varepsilon$, $k>0$. 
Combining  blowing-up technique and 
Fenichel's theory we give sufficient conditions for identifying the sliding region. In Section \ref{s4} 
we state and prove the main result, which consists in conditions for identifying the sliding using the 
 parameters of the double regularization. In Section \ref{s5}, we study a class of planar 
quadratic system, that is useful to determine the sliding regions. In Section \ref{s6} some examples 
are presented to illustrate the main results.

\section{Basic Tools}\label{s2}

This section is dedicated to presenting preliminary results that will be necessary to prove our main results.

\subsection{Singular Perturbation Tools}
Let $\mathcal{U}\subseteq\R^{n+m}$ be an open subset. A singular perturbation problem in $\mathcal{U}$ 
is a differential system which can be written 
like
\begin{eqnarray}
\e\dot{x}_1= \alpha(x_1,x_2,\varepsilon),\quad  
 \dot{x}_2= \beta(x_1,x_2,\varepsilon),\label{padrao1}
\end{eqnarray}
with $\alpha,\beta$ smooth functions, $\varepsilon \in (-\varepsilon_0, \varepsilon_0)$, $\varepsilon_0 >0$ small and 
$(x_1,x_2) \in \mathcal{U}\subseteq \R^n \times \R^m$.  Equivalently, after the time rescaling 
$\tau=t/\varepsilon $, system \eqref{padrao1} becomes
\begin{eqnarray}
x_1'= \alpha(x_1,x_2,\varepsilon),\quad 
x'_2=\e \beta(x_1,x_2,\varepsilon). \label{padrao2}
\end{eqnarray}

The systems (\ref{padrao1}) and (\ref{padrao2}) are called \emph{slow system} and  \emph{fast system},  
respectively.
 By setting $\varepsilon =0$ in (\ref{padrao1}) and (\ref{padrao2}),
we obtain two different limit problems, the \textit{reduced problem}
\begin{eqnarray}
\dot{x}_2= \beta(x_1,x_2,0),\quad 0= \alpha (x_1,x_2,0),  \label{slow}
\end{eqnarray}
and the \textit{layer problem}
\begin{eqnarray}
x'_1= \alpha (x_1,x_2,0),\quad  x'_2= 0.  \label{fast}
\end{eqnarray}

Under adequate assumptions, $\alpha (x_1,x_2,0)=0$ defines a 
manifold $\mathcal{S}$, that will be called \textit{critical manifold}, on which (\ref{slow}) defines a 
dynamical system. But at the same time 
$\mathcal{S}$ is the set  of equilibrium points of (\ref{fast}). So, appropriately combining results 
on the dynamics of these two limiting problems, we obtain results on the dynamics of the 
singularly perturbed problem, for $\varepsilon$ sufficiently small.

Consider system (\ref{padrao2}) suplemented by $\varepsilon'=0$,
\begin{eqnarray}
x'_1= \alpha(x_1,x_2\varepsilon),\quad
x'_2= \e\beta(x_1,x_2\varepsilon), \quad
\varepsilon'=0,
\label{padrao3}
\end{eqnarray}
which is defined in $\mathcal{U} \times (-\varepsilon_0, \varepsilon_0)$. The vector field associated 
to (\ref{padrao3}) will be denoted by 
\begin{equation}\label{xf}
X(x_1,x_2,\varepsilon)=( \alpha(x_1,x_2,\varepsilon), \e\beta(x_1,x_2,\varepsilon), 0),
\end{equation}
with $(x_1,x_2,\e) \in \mathcal{U} \times (-\varepsilon_0, \varepsilon_0)$. By calculating the eigenvalues of 
$LX(x_1,x_2,0)$, with $(x_1,x_2)\in\mathcal{S}$, we have that $\lambda =0$ is a trivial eigenvalue of algebraic 
multiplicity $m+1$.  The remaining eigenvalues are called nontrivial eigenvalues. We 
denote the numbers of nontrivial eigenvalues in the left half plane, in the 
imaginary axis and in the right half plane by  $k^s$, $k^c$ and $k^u$, respectively.

Let $\mathcal{S}_r\subset\mathcal{S}$ be the open set where the nontrivial eigenvalues are nonzero. 
The manifold $\mathcal{S}_r$ 
can be characterized as
$$\mathcal{S}_r=\{(x_1,x_2) \in \mathcal{S}: \text{rank\ } D_{x_1}\alpha(x_1,x_2,0)=n \}.$$
$\mathcal{S}_r$ can be parametrized, locally, by solving the equation $\alpha(x_1,x_2,0)=0$, using 
the implicit function theorem. Let $\mathcal{S}_h \subset \mathcal{S}_r$ be the open set where all the nontrivial 
eigenvalues have nonzero real part, i.e, compact sets $K \subset \mathcal{S}_h$ are normally hyperbolic invariant manifolds 
of (\ref{fast}).

Next theorem is a classical result due to Fenichel, and its proof  can be founded in \cite{F, S}.

\begin{theorem}\label{Fenichel}
 Let $X(x_1,x_2,\e)$,
$\varepsilon \in (-\varepsilon_0, \varepsilon_0)$ be the  family of smooth vector fields on $\mathcal{U}$ given by \eqref{xf} 
and  $\mathcal{S}$ its slow manifold. Let $\mathcal{N}\subset \mathcal{S}_h$ be a $j$-dimensional compact invariant manifold of the
reduced vector field \eqref{slow}, with a $j+ j^s$-dimensional local stable manifold $W^s$ 
and a $j+ j^u$-dimensional local unstable manifold $W^u$. 
\begin{itemize}
\item[i-)] There exist $\e_1 > 0$ and a  family of smooth manifolds $\mathcal{N}_\varepsilon$ with 
$\varepsilon \in (0, \varepsilon_1)$ such that $\mathcal{N}_0=\mathcal{N}$ and 
$\mathcal{N}_\varepsilon$ is an invariant manifold
of $X(x_1,x_2,\e)$;
\item[ii-)] There are a family of smooth manifolds $(j+j^s+k^s)$-dimensional $\mathcal{N}_\varepsilon^s$
 with $\varepsilon \in (0, \varepsilon_1)$ and  family of smooth manifolds 
$(j+j^u+k^u)$-dimensional $\mathcal{N}_\varepsilon^u$ with $\varepsilon \in (0, \varepsilon_1)$ 
such that  the manifolds $\mathcal{N}_\varepsilon^s$ and $\mathcal{N}_\varepsilon^u$ are, locally, 
stable and unstable manifolds of $\mathcal{N}_\varepsilon$.
\end{itemize}
\end{theorem}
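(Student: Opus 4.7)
The plan is to deduce this from the general persistence theorem for normally hyperbolic invariant manifolds, applied to the autonomous extension \eqref{padrao3} on $\mathcal{U}\times(-\varepsilon_0,\varepsilon_0)$. First I would localize around $\mathcal{N}$: since $\mathcal{N}\subset\mathcal{S}_h$ and $\operatorname{rank} D_{x_1}\alpha=n$ there, the implicit function theorem parametrizes a neighborhood of $\mathcal{N}$ inside $\mathcal{S}_h$ as a graph $x_1=h_0(x_2)$. After a smooth change of coordinates I may assume that on this chart the layer field splits into stable and unstable blocks of ranks $k^s$ and $k^u$ with a uniform spectral gap away from the imaginary axis.

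Next I would extend $\mathcal{N}$ to a compact overflowing-invariant manifold for the reduced flow by a standard cutoff that leaves the flow unchanged near $\mathcal{N}$. For the extended vector field, the product $\mathcal{N}\times\{0\}$ is then normally hyperbolic in the sense of Hirsch-Pugh-Shub: the tangent directions consist of the $j$-dimensional dynamics along $\mathcal{N}$ together with $\partial_\varepsilon$, while the normal directions decompose as $E^s\oplus E^u$ with contraction and expansion rates bounded uniformly away from zero. Part (i) then follows from the Hadamard graph transform: parametrize candidate perturbations of $\mathcal{N}$ as graphs $\sigma_\varepsilon$ of $C^k$ sections of the normal bundle, and show that the time-$T$ pushforward of $X$ acts as a contraction on this space thanks to the spectral gap. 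The unique fixed point is a smooth family $\mathcal{N}_\varepsilon$ with $\mathcal{N}_0=\mathcal{N}$; joint smoothness in $(x_2,\varepsilon)$ follows from the $C^r$-section theorem.

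For part (ii), the invariant manifolds $\mathcal{N}_\varepsilon^s$ and $\mathcal{N}_\varepsilon^u$ arise from the same machinery. I would start by saturating $W^s$ and $W^u$ along the fast stable and unstable fibers at $\varepsilon=0$, producing invariant manifolds of dimensions $j+j^s+k^s$ and $j+j^u+k^u$ for the layer flow. These persist under perturbation as graphs over $\mathcal{N}_\varepsilon$ inside the fast stable and unstable bundles, again by a graph-transform fixed point in which the stronger normal contraction (resp.\ expansion) carries the argument through without modification.

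The main obstacle is uniform control of the spectral gap as $\varepsilon\to 0$. On the slow manifold the tangential motion is $O(\varepsilon)$ while the normal rates are $O(1)$, but to invoke the persistence theorem the cone conditions, graph Lipschitz bounds and smoothness estimates must all be taken uniformly in $\varepsilon$ on a fixed neighborhood; this is where one uses that $\mathcal{N}$ is compact and lies in the open set $\mathcal{S}_h$. Once these uniform estimates are in place, the standard invariant-manifold technology applies, and restricting the resulting global objects back to a neighborhood of $\mathcal{N}$ yields the statement.
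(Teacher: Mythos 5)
First, a point of comparison: the paper does not prove this statement at all; it is quoted as a classical theorem of Fenichel and the proof is delegated to \cite{F,S}. So your proposal is to be measured against the standard arguments in those references, and there it has a genuine gap. In part (i) you apply the Hirsch--Pugh--Shub graph transform directly to $\mathcal{N}\times\{0\}$ in the extended system \eqref{padrao3}, asserting that the normal directions are exactly the fast ones, splitting as $E^s\oplus E^u$ with contraction/expansion rates bounded uniformly away from zero. That is only true when $j=m$, i.e.\ when $\mathcal{N}$ is an open piece of the critical manifold. In the general case $j<m$ (e.g.\ $\mathcal{N}$ an equilibrium or periodic orbit of the reduced flow, which is exactly the situation exploited later in the paper), the normal bundle of $\mathcal{N}\times\{0\}$ also contains the $m-j$ directions tangent to $\mathcal{S}$ but transverse to $\mathcal{N}$. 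At $\varepsilon=0$ every point of $\mathcal{S}$ is an equilibrium of the layer flow \eqref{fast}, so the linearized rates in those directions are zero, the same as the tangential rates along $\mathcal{N}$: there is no spectral gap and no uniform cone condition. Concretely, at $\varepsilon=0$ your time-$T$ graph transform fixes every nearby invariant piece of $\mathcal{S}$, so it cannot be a contraction singling out $\mathcal{N}$; the data that selects $\mathcal{N}_\varepsilon$ (hyperbolicity of $\mathcal{N}$ with respect to the reduced flow \eqref{slow}) enters only at order $\varepsilon$, and the ``uniform in $\varepsilon$'' estimates you invoke cannot hold because the slow-transverse normal rates are $O(\varepsilon)$. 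Part (ii) inherits the same problem, since it rests on the same persistence step; note also that you never state the hypothesis that $\mathcal{N}$ is normally hyperbolic for the reduced flow, which is implicit in the stable/unstable manifold assumptions and is indispensable.

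The argument in \cite{F,S} is two-tiered, and your first step is close to tier one but must not be merged with tier two. Tier one: compact pieces of $\mathcal{S}_h$ are normally hyperbolic for the layer problem (there the normal directions really are only the fast ones, with $O(1)$ rates), so after a cutoff to obtain overflowing invariance they persist as locally invariant slow manifolds $\mathcal{S}_\varepsilon$, together with their fast stable and unstable manifolds and fibrations; this is where the graph transform and the $C^r$-section theorem belong. Tier two: restrict $X(x_1,x_2,\varepsilon)$ to $\mathcal{S}_\varepsilon$ and rescale time by $\varepsilon$; this is a regular $O(\varepsilon)$ perturbation of the reduced flow on $\mathcal{S}_0$, so the classical (non-singular) persistence theorem applied to the normally hyperbolic invariant manifold $\mathcal{N}$ yields $\mathcal{N}_\varepsilon\subset\mathcal{S}_\varepsilon$ and its stable/unstable manifolds of dimensions $j+j^s$ and $j+j^u$ inside $\mathcal{S}_\varepsilon$. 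Saturating these with the fast stable and unstable fibers over $\mathcal{S}_\varepsilon$ then produces $\mathcal{N}^s_\varepsilon$ and $\mathcal{N}^u_\varepsilon$ of dimensions $j+j^s+k^s$ and $j+j^u+k^u$. Your saturation idea in part (ii) is the right mechanism, but it must be applied after this two-step persistence, not as a substitute for it.
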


\subsection{Sliding region depending on the regularization}

In this section, we introduce some concepts for piecewise smooth systems.

\begin{definition}\label{defmcross} Let  $0 < m \leq n$ be integers.
\begin{itemize}
\item[a.]  The subset $\Sigma \subset \R^n$ given by
$\Sigma = \left\{\prod_{k=1}^m x_k=0 \right\}$
is called a $m$-cross.
\item[b.] A $m$-cross piecewise-smooth vector field defined on $\R^n$ is a vector field  of the kind 
$$
X=\dfrac{1}{2^m} \sum_{s} L_s(\sgn(x))X_s
$$
where $\{X_s\}$ is a collection of $2^m$ smooth vector fields, $s=(s_1,...,s_m)\in \{-,+\}^m$, 
$\sgn(x)= (\sgn(x_1), \cdots ,\sgn(x_m)) \in \{-,+\}^m$ and $$L_s(y)=\frac{1}{2^m} \prod_{k=1}^m (1+s_ky_k).$$

\end{itemize}
\end{definition}

Thus a $1$--cross piecewise-smooth vector field defined on $\R^2$ is 
\[X=\dfrac{1}{2}\left[(1+\sgn(x_1))X_++(1-\sgn(x_1))X_-)\right]\] 
and a $2$--cross piecewise-smooth vector field defined on $\R^3$ is
\[X=\dfrac{1}{4}\left[aX_{++}+bX_{+-}+cX_{-+}+dX_{--}\right]\] where
\[a=(1+\sgn(x_1))(1+\sgn(x_2)),\quad b=(1+\sgn(x_1))(1-\sgn(x_2)),\]
\[c=(1-\sgn(x_1))(1+\sgn(x_2)),\quad d=(1-\sgn(x_1))(1-\sgn(x_2)).\]

\begin{figure}[!htb] 
\centering
\includegraphics[scale=0.25]{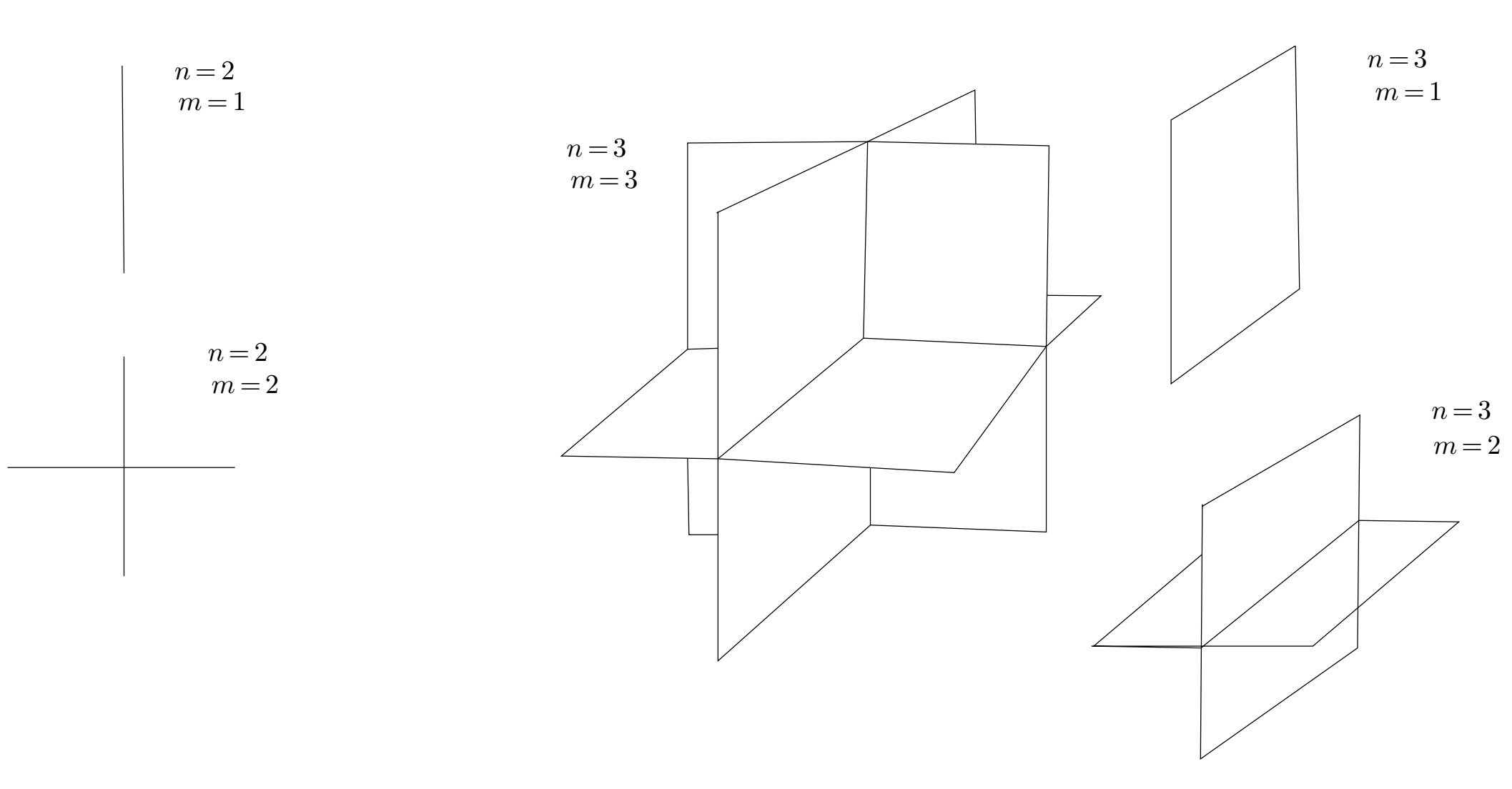}
\caption{\small Possible $m$-cross on $\R^2$ and on  $\R^3$.}
\end{figure}

Let  $\varphi: \R \longrightarrow \R$ be a  monotonic transition function. We are going to use a particular kind of 
regularization, which is induced by $\varphi$.

\begin{figure}[!htb] 
\centering
\includegraphics[scale=0.2]{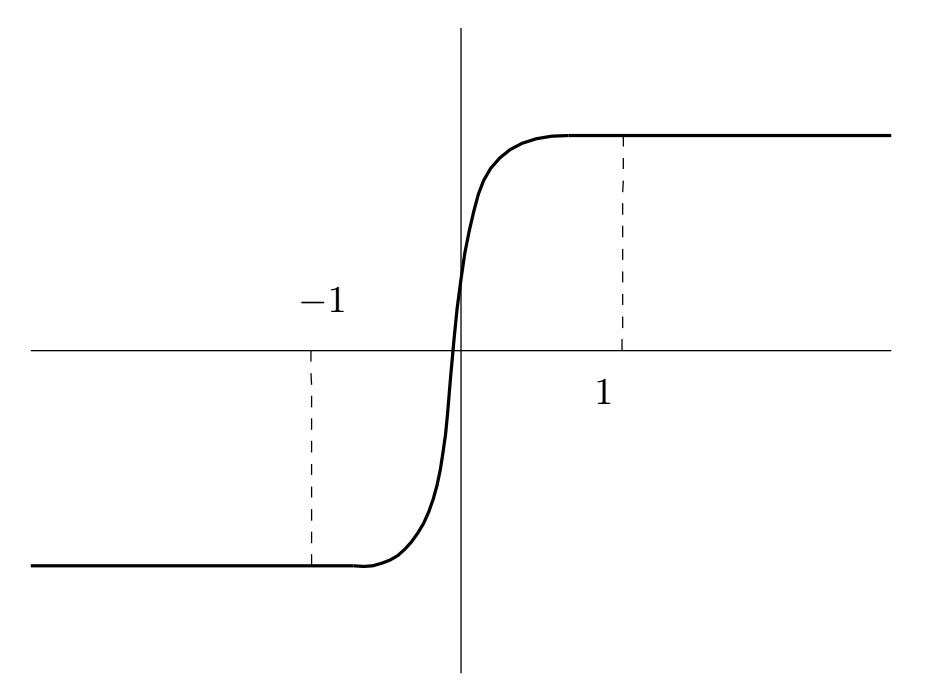}
\caption{\small Sketch of a transition function.}
\end{figure}

\begin{definition}
Given a transition function $\varphi$, the regularization of $m$-cross piecewise-smooth vector field $X$, 
is the family of smooth vector fields

\begin{equation}
X_\varepsilon^\varphi=\frac{1}{2^m}\sum_{s} L_s\left( \varphi\left(\frac{x_1}{\varepsilon_1}\right), \cdots,\varphi\left(\frac{x_m}{\varepsilon_m}\right)        \right)X_s,
\end{equation}
with  $s \in \{-,+\}^m$ and $\varepsilon \in (\R_{+})^m$.
\end{definition}

Note that induced regularization of a $1$-cross is the well known Sotomayor-Teixeira regularization.

Denote 
$$
\Sigma_s=\{\sgn_0(x_1)=s_1,\cdots, \sgn_0(x_n)=s_n  \},
$$
where $\sgn_0(x)$ is the sign function extended to $0$ by $\sgn_0(0)=0$.
Consider the  stratification of $\R^n= \bigcup_s \Sigma_s,$
where the union is taken over all sign vectors  $s=(s_1,\cdots,s_m) \in \{0,-,+\}^m$. 
Each $\Sigma_s$ is a submanifold of codimension equal to the number $z(s)$ of zeros in the sign vector $s$. 
Notice that this induces a
stratification of the $m$-cross
$$
\Sigma = \bigcup_{z(s)>0} \Sigma_s,
$$
where the union is taken over all sign vectors $s$ such that  $z(s)>0$.  \\

A \textit{regularizing curve} is a continuous parametrized curve $\e(\mu)$ in the parameter space 
$(\R_{+})^m$ such as $\displaystyle\lim_{\mu \rightarrow 0} \varepsilon(\mu)=0$.\\

\begin{definition}\label{deslize}
Let $\Sigma_s$ be one of the strata of $\Sigma$. We say that a regularizing curve $\e(\mu)$ produces
a sliding along $\Sigma_s$ if there exists a smooth manifold $S$ of codimension $z(s)$ in product space 
$(\mu,x) \in \R_{\geq 0} \times \R^n$ such that:
\begin{itemize}
\item[i.] $S \cap \{ \mu=0\}=\Sigma_s$;
\item[ii.] For each $\mu_0>0$, $S_{\mu_0}=S \cap \{ \mu=\mu_0\}$ is an invariant manifold for the induced vector field 
$X_{\varepsilon(\mu_0)}^\varphi$.
\end{itemize}
\begin{figure}[!htb] 
\centering
\includegraphics[scale=0.3]{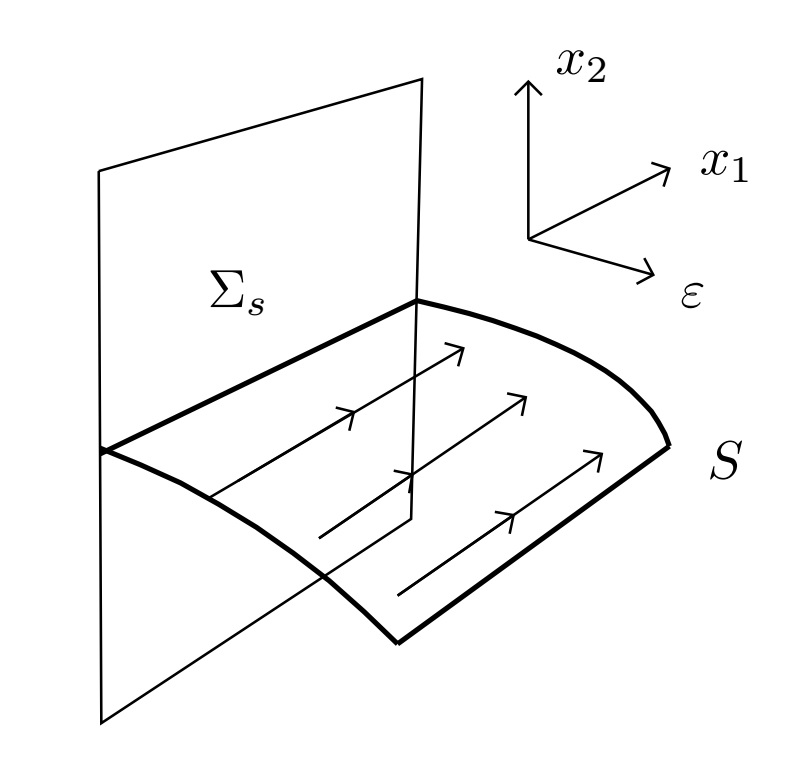}
\caption{\small Sliding with $n=2$, $m=1$.}
\end{figure}
\end{definition}

\section{ $2$-cross and regularization curve $(\e,k\e)$} \label{s3}

In this section we study the sliding associated with $2$-cross piecewise-smooth vector field $X$ on $\R^n$ with $n>2$.
Without loss of generality we assume that $n=3$ and 
\begin{equation}
X=\frac{1}{4} \sum_{s \in \{-,+\}^2} L_s( \sgn(\textbf{x}))X_s \label{dsist}
\end{equation}
where $X_s(\textbf{x})=(f_s(\textbf{x}),g_s(\textbf{x}),h_s(\textbf{x}))$ is smooth and $\textbf{x}=(x_1,x_2,x_3)$.

Let $\varphi: \R \longrightarrow \R$ be a transition function. Consider the regularization
\begin{equation}
X_{\varepsilon, \eta}(\textbf{x})=\frac{1}{4}\left(\sum_{s \in \{-,+\}^2} 
\left(1 +s_1 \varphi\left(\frac{x_1}{\varepsilon} \right)\right)\left(1 +s_2 
\varphi\left(\frac{x_2}{\eta} \right)\right)X_s(\textbf{x})  \right) \label{dreg}
\end{equation}
and its corresponding differential system
\begin{eqnarray}
\begin{split}
x_1'&= \frac{1}{4}\left(\sum_{s \in \{-,+\}^2} \left(1 +s_1 \varphi
\left(\frac{x_1}{\varepsilon} \right)\right)\left(1 +s_2 \varphi
\left(\frac{x_2}{\eta} \right)\right)f_s(\textbf{x})  \right),\\ 
x_2'&= \frac{1}{4}\left(\sum_{s \in \{-,+\}^2} \left(1 +s_1 \varphi
\left(\frac{x_1}{\varepsilon} \right)\right)\left(1 +s_2 \varphi
\left(\frac{x_2}{\eta} \right)\right)g_s(\textbf{x})  \right), \\
x_3'&=\frac{1}{4}\left(\sum_{s \in \{-,+\}^2} \left(1 +s_1 \varphi
\left(\frac{x_1}{\varepsilon} \right)\right)\left(1 +s_2 \varphi
\left(\frac{x_2}{\eta} \right)\right)h_s(\textbf{x})  \right). \label{sist1}
\end{split}
\end{eqnarray}

The sliding region on $\Sigma_s$, for $s=(s_1,s_2) \neq (0,0)$, has been extensively studied and agree with the 
Filippov's convention.  However, on $\Sigma_{00}$ the classical approach does not apply, as 
seen in Section 1. Thus, we  adopt the definition of sliding region  introduced in the previous section.\\

First of all, we consider the double regularization with only one parameter $\e$, that is, we choose a regularization curve of the kind
$(\e , K\e)$, with $K>0$. Thus, in system (\ref{sist1}) we replace $\eta$ by $K\e$.

\begin{theorem}\label{teo1}
	Consider the $2$-cross piecewise smooth vector field \eqref{dsist}. There exists
	a smooth function $D(x_3)$ satisfying that if $D(x_3)\neq 0$ then $(0,0,x_3)\in\Sigma_{00}$
	is a sliding point according to regularization \eqref{dreg}, with $\eta=K\e$.
\end{theorem}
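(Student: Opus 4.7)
\medskip

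\noindent\textbf{Proof plan for Theorem \ref{teo1}.} My plan is to recast $X_{\e,K\e}$ as a slow--fast system via a directional blow-up, locate a normally hyperbolic critical point, and then invoke Fenichel's theorem to produce the invariant manifold required by the sliding definition. First I would perform the rescaling $x_1=\e\bar{x}_1$, $x_2=K\e\bar{x}_2$. This is exactly what kills the singular dependence of the cut-off functions on $\e$, since $\varphi(x_1/\e)=\varphi(\bar{x}_1)$ and $\varphi(x_2/(K\e))=\varphi(\bar{x}_2)$. Collecting terms the system takes the standard form \eqref{padrao1} with fast variables $(\bar{x}_1,\bar{x}_2)$ and slow variable $x_3$:
\[
\e\bar{x}_1'=F_1(\Phi_1,\Phi_2,x_3)+O(\e),\quad \e\bar{x}_2'=K^{-1}F_2(\Phi_1,\Phi_2,x_3)+O(\e),\quad x_3'=F_3+O(\e),
\]
where $\Phi_i=\varphi(\bar{x}_i)$ and
\[
F_i(\Phi_1,\Phi_2,x_3)=\tfrac{1}{4}\sum_{s\in\{-,+\}^2}(1+s_1\Phi_1)(1+s_2\Phi_2)\,(X_s)_i(0,0,x_3).
\]

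Next I would analyse the critical manifold $\mathcal{S}_0=\{F_1=F_2=0\}$. Expanding gives two bilinear relations
\[
4F_i=A_i(x_3)+B_i(x_3)\Phi_1+C_i(x_3)\Phi_2+E_i(x_3)\Phi_1\Phi_2,\qquad i=1,2,
\]
with smooth coefficients in $x_3$. Eliminating $\Phi_1$ between $F_1=0$ and $F_2=0$ yields a quadratic in $\Phi_2$ whose coefficients are smooth in $x_3$; the function $D(x_3)$ I propose is the product of its discriminant and the sign/location factors that force the selected root into the open interval $(-1,1)$. When $D(x_3)\neq 0$ this singles out a smooth root $\Phi_2^*(x_3)\in(-1,1)$, back-substitution gives $\Phi_1^*(x_3)\in(-1,1)$, and, because $\varphi$ is strictly increasing on $(-1,1)$, the curve $\bar{x}^*(x_3)=(\varphi^{-1}(\Phi_1^*),\varphi^{-1}(\Phi_2^*),x_3)$ sits on $\mathcal{S}_0$. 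The same non-vanishing of $D$ will force $\partial(F_1,F_2)/\partial(\Phi_1,\Phi_2)$ at $\bar{x}^*(x_3)$ to be invertible, so the two transverse eigenvalues of the layer linearisation are non-zero; after verifying that they are not purely imaginary I conclude $\bar{x}^*(x_3)\in\mathcal{S}_h$.

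Finally, Theorem \ref{Fenichel} delivers a smooth family of invariant manifolds $\mathcal{N}_\e$ of the blown-up vector field containing the curve $\bar{x}^*(x_3)$ at $\e=0$. Blowing down by $(\bar{x}_1,\bar{x}_2,x_3)\mapsto(\e\bar{x}_1,K\e\bar{x}_2,x_3)$ produces smooth invariant manifolds $S_\e$ of $X_{\e,K\e}$. Because $(\bar{x}_1,\bar{x}_2)$ stays bounded on $\mathcal{N}_\e$ near $\bar{x}^*(x_3)$, the first two coordinates of the image are $O(\e)$, so $S_\e\cap K$ Hausdorff-converges to $\Sigma_{00}\cap K$ on any compact $K$, and this is precisely the sliding condition of the definition.

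The main obstacle is Step 2: engineering a \emph{single} smooth function $D(x_3)$ whose non-vanishing simultaneously controls (a) the existence of a root of the eliminant in the open square $(-1,1)^2$, (b) its smooth dependence on $x_3$, (c) invertibility of the Jacobian $\partial(F_1,F_2)/\partial(\Phi_1,\Phi_2)$ at that root, and (d) the exclusion of purely imaginary eigenvalues (the only non-hyperbolic case not ruled out by invertibility alone). I expect $D$ to be built from the resultant of the two bilinear forms with an additional trace-type factor handling the Hopf-like degeneracy; verifying that this combined object is smooth and sharp is where the technical effort concentrates.
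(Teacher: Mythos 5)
Your overall framework coincides with the paper's: blow up $x_1=\e \bar x_1$, $x_2=K\e\bar x_2$, identify the critical manifold of the resulting slow--fast system \eqref{slow_sist1}, check normal hyperbolicity, and invoke Theorem \ref{Fenichel}, then blow down and note that boundedness of $(\bar x_1,\bar x_2)$ forces Hausdorff convergence to $\Sigma_{00}$. The gap is that the central object of the theorem, the smooth function $D(x_3)$, is never actually produced: you describe a wish list (discriminant of an eliminant quadratic, ``sign/location factors'' forcing a root into $(-1,1)^2$, plus an extra ``trace-type factor'' to exclude purely imaginary eigenvalues) and explicitly defer its construction as the main obstacle. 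That construction is both harder than necessary and problematic as sketched --- the non-vanishing of a single smooth function of $x_3$ cannot naturally encode the open two-sided condition that a root lie inside the square $(-1,1)^2$, and the theorem does not ask $D$ to certify existence or sharpness of the critical point at all; it only asks for a sufficient condition for sliding.

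The paper's proof resolves exactly the point you flag as difficult, and does so in one line: with $D_0$ the $2\times 2$ Jacobian $D_{(x_1,x_2)}(X_1,X_2)(\mathbf{x}_0,0)$ of the fast subsystem at a point $\mathbf{x}_0$ of the critical manifold (the matrix \eqref{matrizhip}), it sets $D(x_3)=\operatorname{tr}(D_0)\cdot\det(D_0)$. For a $2\times 2$ matrix this single product already delivers your items (c) and (d) simultaneously: $\det(D_0)\neq 0$ gives rank $2$, and a purely imaginary pair would require $\operatorname{tr}(D_0)=0$ with $\det(D_0)>0$, which is excluded by $\operatorname{tr}(D_0)\det(D_0)\neq 0$; hence the nontrivial eigenvalues have nonzero real part, $\mathbf{x}_0\in S_h$, and Fenichel applies on a compact neighborhood in $S_h$, exactly as in your final step. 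So your plan is salvageable by replacing your elaborate candidate for $D$ with this trace--determinant product evaluated along the critical manifold (existence of the critical point over $x_3$ is taken as part of the setting, as in the paper, rather than something $D$ must detect); without that replacement, your argument does not yet prove the statement.
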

\begin{proof}
Consider the regularized system \eqref{sist1}. The blow-up 
$x_1\rightarrow\e x_1$ and $x_2\rightarrow K\e x_2$, provides 
\begin{equation}
{x}_1'=  X_1({\textbf{x}},\e),\quad {x}_2'= X_2({\textbf{x}},\e) \quad x_3'=X_3({\textbf{x}},\e), \label{slow_sist1}\end{equation}
where 
\begin{eqnarray*}
\begin{split}
 X_1({\textbf{x}}, \e)&=  \frac{1}{4\e}\left(\sum_{s \in \{-,+\}^2} 
 \left(1 +s_1 \varphi\left({x}_1 \right)\right)\left(1 +s_2 \varphi
 \left({x}_2 \right)\right)f_s(\e{x}_1, K\e{x}_2,x_3)  \right),\\ 
X_2({\textbf{x}}, \e)&= \frac{1}{4K\e}\left(\sum_{s \in \{-,+\}^2} 
\left(1 +s_1 \varphi\left({x}_1 \right)\right)\left(1 +s_2 \varphi
\left({x}_2 \right)\right)g_s(\e{x}_1, K\e{x}_2,x_3)  \right), \\
X_3({\textbf{x}}, \e)&=\frac{1}{4}\left(\sum_{s \in \{-,+\}^2} 
\left(1 +s_1 \varphi\left({x}_1 \right)\right)\left(1 +s_2 \varphi
\left({x}_2 \right)\right)h_s(\e{x}_1, K\e{x}_2,x_3) \right), 
\end{split}
\end{eqnarray*}
with ${\textbf{x}}=({x}_1, {x}_2,x_3)$. System \eqref{slow_sist1} is the slow system and the corresponding vector field is denoted by 
$X_\varepsilon ({\textbf{x}},\varepsilon)= (X_1({\textbf{x}},\e),X_2({\textbf{x}},\e),X_3({\textbf{x}},\e))$.\\

For  ${\textbf{x}}_0 \in S=\{X_1({\textbf{x}},0)=0\} \cap \{X_2({\textbf{x}},0)=0\}$ denote $D_0$ 
the matrix of \[D_{({x}_1,{x}_2)}\left(X_1,X_2 \right)({\textbf{x}}_0,0).\] 
We will prove that $D(x_3)=\operatorname{tr}(D_0).\det (D_0)\neq 0$
implies $(0,0,x_3)$ to be a sliding point of $X$.

For $\varepsilon =0$, $S$ is  the slow manifold of system \eqref{slow_sist1}. Let $S_r \subset S$ be the open subset of $S$ given by
$S_r=\{{\textbf{x}} \in S: \text{rank\ }  D_{({x}_1,{x}_2)}\left(X_1,X_2 \right)({\textbf{x}},0)=2  \}.$
The matrix $D_0$  is 
given by
\begin{equation}
\left[\begin{array}{rrrr}
\displaystyle\frac{\partial X_1({\textbf{x}}_0,0)}{\partial {x}_1} & \displaystyle\frac{\partial X_1({\textbf{x}}_0,0)}{\partial {x}_2} \\ \\
\displaystyle\frac{\partial X_2({\textbf{x}}_0,0)}{\partial {x}_1} & \displaystyle\frac{\partial X_2({\textbf{x}}_0,0)}{\partial {x}_2} 
\end{array}\right]. \label{matrizhip}
\end{equation}
The hypothesis  implies that 
the rank of $D_0$ is $2$ and the eigenvalues of $D_0$ have nonzero real part. In other words, 
${\textbf{x}}_0 \in S_h \subset S_r$. Since $S_h$ 
is a open set  there exists a neighborhood $V\ni{\textbf{x}}_0$ 
 such that $\bar{V} \subset S_h$. Using the Theorem \ref{Fenichel}, there exists a $C^{r-1}$ family of 
manifolds ${V}_\varepsilon$ with $\varepsilon \in (0, \varepsilon_1)$ 
such that ${V}_0={V}$ and ${V}_\varepsilon$ is an invariant 
manifold $X(\textbf{x},\varepsilon)$, proving that $(0,0,x_3)$ is a sliding point for $X$.

\end{proof}

\begin{example}
\end{example}
Let $X$ be a $2$-cross piecewise-smooth vector field defined on $\R^3$, given by 
$X(x,y,z)=X_{\pm  \pm }(x,y,z)$ where
\begin{eqnarray*}
	\begin{split}
		X_{++}=&\left(-1+x^2,-1+y^2,z\right) \ \ &X_{+-}=&\left(-1+xy,1-zy,z\right)\\ 
		X_{--}=&\left(1,1+z^2,z\right) \ \ &X_{-+}=&\left(1+x+z,-1,z\right),
	\end{split}
\end{eqnarray*}
Consider the regularization of $X$ and the directional blow-up $x\rightarrow \e x$,
$y\rightarrow \e y$. System \eqref{slow_sist1} is 
\begin{equation}
\e x'= X_1(\textbf{x});\quad
\e y'= X_2(\textbf{x});\quad
z'= X_3(\textbf{x}), 
\end{equation}

$\textbf{x}=(x,y,z)$,where
$$
X_1=\left(x^3-zx-3x+z+y(x^3+x^2-xz+2x+z)+y^2(-x^2-x)\right)/4
$$
$$
X_2=\left(-xz^2+z^2+y(xz^2-xz-z^2-z-4)+y^2(xz+x+z+1)+y^3(x+1)\right)/4
$$
and $X_3(\textbf{x})=z$. We are going to check if $p=(0,0,0) \in \Sigma_{00}$ is a sliding point. The function $D(z)$ is given by
$$
D(z)= \left( -{z}^{2}-2\,z-7 \right)  \left( 2\,{z}^{3}+4\,{z}^{2}+7\,z+12
\right). 
$$
Since $D(p)=-84 \neq 0$, $p$ is a sliding point.

\section{Sliding depending of the regularization curve} \label{s4}

In this section we study the sliding associated with $2$-cross piecewise-smooth vector fields generated by smooth vector fields  
$F_s:\R^3\rightarrow\R^3, s\in\{-,+\}^2$ satisfying that $F_s(0,0,0)\neq0$.
Initially we consider constant vector fields $X_s,s\in\{-,+\}^2$ defined in $\R^3$ and then we extend our 
result to the general case via tubular flow.\

Assume that $X_s=(a_s,b_s,1)$ with $a_s,b_s \in \R$, $s=(s_1,s_2)\in\{-,+\}^2$.  
Given a transition function $\varphi: \R \longrightarrow \R$, consider the regularization
\begin{equation}
X_{\varepsilon, \eta}(\textbf{x})=\frac{1}{4}\left(\sum_{s} \left(1 +s_1 \varphi
\left(\frac{x_1}{\varepsilon} \right)\right)\left(1 +s_2 \varphi\left(\frac{x_2}{\eta} \right)
\right)X_s(\textbf{x})  \right),\label{dreg2}
\end{equation}
with $\textbf{x}=(x_1,x_2,x_3)$. Here $(\e,\eta)$ is an arbitrary regularization curve. First of all take the directional blow-up 
$x_1\rightarrow \e x_1$ and $x_2\rightarrow\eta x_2$.  The corresponding  differential system is
\begin{equation}
  \left\{ \begin{aligned}
      {x}_1'&= \frac{1}{\varepsilon}\sum_{s }\left(1 +s_1 \varphi({x}_1) \right)\left(1 +s_2 \varphi({x}_2) \right)a_{s}, \\
      {x}_2'&=\frac{1}{\eta}\sum_{s }\left(1 +s_1 \varphi({x}_1) \right)\left(1 +s_2 \varphi({x}_2) \right)b_{s} , \\
      x_3'&=1,\quad      \varepsilon'=0,\quad\eta' =0.\\
  \end{aligned} \right. \label{csist1}
\end{equation}

The flow of system \eqref{csist1} depends essentially on the first two rows. We refer to these two lines as
${x}_1{x}_2$--system.

\begin{equation}
  \left\{ \begin{aligned}
      {x}_1'&= \frac{1}{\varepsilon}\sum_{s}\left(1 +s_1 \varphi({x}_1) \right)\left(1 +s_2 \varphi({x}_2) \right)a_{s}, \\
    {x}_2'&=\frac{1}{\eta}\sum_{s}\left(1 +s_1 \varphi({x}_1) \right)\left(1 +s_2 \varphi({x}_2) \right)b_{s} .
  \end{aligned} \right. \label{csist2}
\end{equation}
Analyzing the dynamics of (\ref{csist2}), we obtain conditions to define the sliding of system $X$. 
Near $(0,0)$ we have  $\varphi({x}_1) \approx {x}_1 $ 
and $\varphi({x}_2) \approx {x}_2$. Using this equivalence, with a time reescaling  $t=\varepsilon \tau$, 
the system (\ref{csist2}) is equivalent to
\begin{equation}
  \left\{ \begin{aligned}
      \dot{{x}}_1&= \sum_{s }\left(1 +s_1 {x}_1 \right)\left(1 +s_2 {x}_2 \right)a_s, \\
      \dot{{x}}_2&=\frac{\varepsilon}{\eta} \sum_{s }\left(1 +s_1 {x}_1 \right)\left(1 +s_2 {x}_2 \right)b_s .
  \end{aligned} \right. \label{csist3}
\end{equation} 

If $\lambda_1=(a_{++}-a_{-+}-a_{+-}+a_{--})/4 \neq 0$  and $\lambda_2=(b_{++}-b_{-+}-b_{+-}+b_{--})/4 \neq 0$
then system \eqref{csist3} is quadratic and it can be written as
\begin{equation}
  \left\{ \begin{aligned}
      \dot{{x}}_1&= \lambda_1 ({x}_1-\alpha_1)({x}_2-\beta_1)-\delta_1,\\
      \dot{{x}}_2&=\frac{\varepsilon}{\eta} \lambda_2 ({x}_1-\alpha_2)({x}_2-\beta_2)-\delta_2,
  \end{aligned} \right. \label{csist4}
\end{equation}
where $\alpha_1=-(a_{++}+a_{-+}-a_{+-}-a_{--})/\lambda_1$,
$\beta_1=-(a_{++}-a_{-+}+a_{+-}-a_{--})/\lambda_1$ and $\delta_1=(\lambda_1 \alpha_1 \beta_1 -
\left(a_{++}+a_{-+}+a_{+-}+a_{--}\right))/4$.
For $\alpha_2, \beta_2$ e $\delta_2$, the expressions are  the same but changing $a_i$ by $b_i$.

\begin{proposition}\label{prop1}
If  system \eqref{csist4} has two equilibria, then only one of them
is a saddle point. 
\end{proposition}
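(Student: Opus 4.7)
The plan is to exploit the algebraic structure of \eqref{csist4}: the equilibria are intersections of two hyperbolas with axis-parallel asymptotes, and the Jacobian determinant is an affine function on $\R^2$ that vanishes on the line joining the two hyperbola centers $(\alpha_1,\beta_1)$ and $(\alpha_2,\beta_2)$.

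First I would compute directly
\[
\det J(x_1, x_2) \;=\; \frac{\varepsilon}{\eta}\,\lambda_1\lambda_2\, L(x_1, x_2),
\qquad
L(x_1,x_2) := (\beta_2-\beta_1)\,x_1 + (\alpha_1-\alpha_2)\,x_2 + \alpha_2\beta_1 - \alpha_1\beta_2.
\]
Since $(\varepsilon/\eta)\lambda_1\lambda_2$ is a fixed nonzero constant, an equilibrium is a saddle if and only if $L$ is negative there. The proposition thus reduces to the claim that $L(p)L(q)<0$ at the two equilibria $p,q$.

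Next, setting $c_1:=\delta_1/\lambda_1$ and $c_2:=\eta\delta_2/(\varepsilon\lambda_2)$, I would eliminate $x_2$: the first equilibrium equation gives $x_2=\beta_1+c_1/u$ with $u:=x_1-\alpha_1$, and substituting into the second equation and clearing $u$ produces
\[
\gamma\,u^2 + \bigl[(c_1-c_2)+\gamma(\alpha_1-\alpha_2)\bigr]\,u + c_1(\alpha_1-\alpha_2)=0,
\qquad \gamma:=\beta_1-\beta_2.
\]
A short case analysis handles the degenerate situations $\gamma=0$ or $\alpha_1=\alpha_2$: in either case, direct inspection of the two defining equations shows that there is at most one equilibrium, or else the two hyperbolas coincide (giving a full curve of equilibria), contradicting the hypothesis of exactly two equilibria. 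Hence $\gamma\neq 0$, and the quadratic has two distinct roots $u_p,u_q$ with, by Vieta,
\[
u_p u_q = \frac{c_1(\alpha_1-\alpha_2)}{\gamma}.
\]

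Finally, pulling $L$ back to the first hyperbola via $x_1=\alpha_1+u$, $x_2=\beta_1+c_1/u$, the constant terms cancel and one obtains
\[
L(u) = -\gamma u + \frac{c_1(\alpha_1-\alpha_2)}{u} = \frac{\gamma\bigl(u_p u_q - u^2\bigr)}{u},
\]
using the Vieta identity in the last step. Evaluating at the two roots yields the remarkable relation $L(u_p)=\gamma(u_q-u_p)=-L(u_q)$, so
\[
L(p)\,L(q) = -\gamma^2\,(u_p-u_q)^2 < 0,
\]
because $u_p\neq u_q$. Therefore $\det J$ takes opposite signs at $p$ and $q$, and exactly one of them is a saddle. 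The main obstacle is the degeneracy bookkeeping that justifies $\gamma\neq 0$ in the presence of two equilibria; once this is dispatched, the computation collapses cleanly via Vieta into the identity $L(p)=-L(q)$.
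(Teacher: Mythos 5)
Your computation is correct and I could verify every step: $\det J=(\varepsilon/\eta)\lambda_1\lambda_2\,L(x_1,x_2)$ with your affine $L$ vanishing at both centers $(\alpha_1,\beta_1)$, $(\alpha_2,\beta_2)$; the cleared quadratic in $u$ and its Vieta product are right; and the pullback identity $L(u)=\gamma(u_pu_q-u^2)/u$ does give $L(u_p)=\gamma(u_q-u_p)=-L(u_q)$, hence $\det J(P_1)\det J(P_2)<0$ and exactly one equilibrium is a saddle. The starting point is the same as the paper's (the determinant vanishes on the straight line through the two centers, and the issue is that the equilibria lie on opposite sides of it), but the paper merely asserts that the intersection points of the two isoclines fall on opposite sides of this line, whereas you actually prove it, and in the stronger exact form $L(P_1)=-L(P_2)$; you also justify the paper's unproved remark that two isolated equilibria force $\alpha_1\neq\alpha_2$ and $\beta_1\neq\beta_2$. (One slip of wording: ``saddle iff $L<0$'' should be ``saddle iff $\sgn(\lambda_1\lambda_2)\,L<0$''; harmless, since you only use $L(P_1)L(P_2)<0$.)

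One degenerate branch is missing from your bookkeeping: $\delta_1=0$, i.e.\ $c_1=0$. Then the first isocline is not a hyperbola but the pair of lines $x_1=\alpha_1$ and $x_2=\beta_1$, and one of the two equilibria can sit on the line $x_1=\alpha_1$, i.e.\ at $u=0$, where the parametrization $x_2=\beta_1+c_1/u$ and the term $c_1(\alpha_1-\alpha_2)/u$ in your expression for $L(u)$ are not defined; this is precisely the case in which the cleared quadratic acquires the root $u=0$. The conclusion still holds: on that line $L(\alpha_1,x_2)=(\alpha_1-\alpha_2)(x_2-\beta_1)$, and evaluating at the equilibrium $(\alpha_1,\beta_2+c_2/(\alpha_1-\alpha_2))$ gives $L=c_2-\gamma(\alpha_1-\alpha_2)=\gamma u_q$, again the negative of the value $-\gamma u_q$ at the other equilibrium, so $L(P_1)=-L(P_2)$ persists. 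But as written your pullback step presupposes $c_1\neq0$, so this case must be listed alongside $\gamma=0$ and $\alpha_1=\alpha_2$; note you cannot dispose of it by swapping the roles of the two equations, since $c_1=c_2=0$ with $\alpha_1\neq\alpha_2$, $\beta_1\neq\beta_2$ also produces exactly two equilibria, namely the corner points $(\alpha_1,\beta_2)$ and $(\alpha_2,\beta_1)$.
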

\begin{proof}
The linearization of system \eqref{csist4} is
$$
L({x}_1,{x}_2)= \left[ \begin {array}{cc} \lambda_{1}\, \left( {x}_2-\beta_{1} \right) &
\lambda_{1}\, \left( {x}_1-\alpha_{1} \right) \\ \noalign{\medskip}{\displaystyle \frac 
{\varepsilon\,\lambda_{2}\, \left( {x}_2-\beta_{2} \right) }{\eta}}&{\displaystyle \frac {
\varepsilon\,\lambda_{2}\, \left( {x}_1-\alpha_{2} \right) }{\eta}}
\end {array} \right] 
$$
and it has determinant  given by

\[\det({x}_1,{x}_2)=-{\frac {\lambda_{1}\,\e\,\lambda_{2}\, \left( \beta_{1}\,{x}_1-
\beta_{1}\,\alpha_{2}-{x}_1\beta_{2}+\alpha_{1}\,\beta_{2}-\alpha_{1}\,{x}_2+{x}_2
\alpha_{2} \right) }{\eta}}.\]

If system \eqref{csist4} has two distinct equilibra $P_1,P_2$, we have that $\alpha_1 \neq \alpha_2$ and
$\beta_1 \neq \beta_2$. So $\det({x}_1,{x}_2)=0$ defines a straight line passing through the points
$(\alpha_1,\beta_1)$ and $(\alpha_2,\beta_2)$. Note that, the ratio between the parameters does not affect the straight line
determined by $\det ({x}_1,{x}_2)=0$, for any $ \e, \eta$. So, the intersection  $\dot{{x}}_1=0$ and $\dot{{x}}_2=0$, which 
give us the equilibria, occurs on opposite sides of $\det({x}_1,{x}_2)=0$, that is, $\det(P_1)\det(P_2)<0$, 
proving that one of the points is a saddle. 
\end{proof}

Applying a translation ${x}_1-\alpha_1$, ${x}_2-\beta_1$,  and a time reescaling
$\tau_1=\tau/\lambda_1$ on (\ref{csist4}) we get
\begin{equation}
  \left\{ \begin{aligned}
      \dot{{x}}_1&= {x}_1 {x}_2-\delta_1 \\
      \dot{{x}}_2&=\frac{\varepsilon}{\eta } C ({x}_1-{\alpha}_2)({x}_2-{\beta}_2 )-\delta_2,
  \end{aligned} \right. \label{csist5}
\end{equation}
where $C= \lambda_2/\lambda_1$ and possibly different constants $\alpha_1, \alpha_2$.

\begin{proposition}\label{mainprop}
Let $X$ be the vector field (\ref{csist5}) having two equilibria $P,Q$ where $P=({x_1},{x_2})$ 
denotes the non-saddle equilibrium with  ${x_1}\neq0$.
\begin{itemize}
\item[i.] If $\frac{\varepsilon}{\eta} \rightarrow k $, as $\e ,\eta \rightarrow 0$, with $k >0$, 
then the nature of $P$ and $Q$ do not change, for any $\varepsilon, \eta$;
\item[ii.] If $\frac{\e}{\eta}\rightarrow 0$ or $\frac{\e}{\eta}\rightarrow \infty$ as $\e ,\eta \rightarrow 0$,   
then $P$ is asymptotically stable or asymptotically unstable.  Moreover, if $P$ is asymptotically stable 
(unstable) for  $\frac{\e}{\eta}\rightarrow 0$ then it is asymptotically unstable (stable) for $\frac{\e}{\eta}\rightarrow \infty.$
\end{itemize}
\end{proposition}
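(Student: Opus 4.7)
My plan is to study the linear type of the two equilibria $P, Q$ of \eqref{csist5} by examining the trace and determinant of the Jacobian
\[
J(x_1, x_2) = \begin{pmatrix} x_2 & x_1 \\ \mu C(x_2 - \beta_2) & \mu C(x_1 - \alpha_2) \end{pmatrix},
\qquad \mu := \varepsilon/\eta,
\]
so that $\operatorname{tr}(J) = x_2 + \mu C(x_1 - \alpha_2)$ and $\det(J) = \mu C[\beta_2 x_1 - \alpha_2 x_2]$. By Proposition~\ref{prop1}, $\det(J_P) > 0$ and $\det(J_Q) < 0$ throughout.

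For part (i), the right-hand side of \eqref{csist5} depends on $(\varepsilon, \eta)$ only through the ratio $\mu$. Hence for any regularization curve with $\varepsilon/\eta = k > 0$ fixed, the dynamical system is literally the same, and so are its equilibria and the eigenvalues of its Jacobian. The hyperbolic type of $P$ and $Q$ is therefore a function of $k$ only and does not depend on the particular $(\varepsilon, \eta)$ chosen on the curve.

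For part (ii), I would carry out an asymptotic analysis of the equilibrium equations $x_1 x_2 = \delta_1$ and $\mu C(x_1-\alpha_2)(x_2-\beta_2) = \delta_2$. Substituting $x_2 = \delta_1/x_1$ yields a quadratic in $x_1$,
\[
\mu C \beta_2 x_1^2 - [\mu C(\alpha_2\beta_2 + \delta_1) - \delta_2]\, x_1 + \mu C \alpha_2 \delta_1 = 0.
\]
In the regime $\mu \to 0$ the linear coefficient blows up, so one root tends to $0$ and the other diverges; leading-order expansions then yield the limiting values of $\operatorname{tr}(J)$ and $\det(J)$ at the two equilibria. In the regime $\mu \to \infty$ the roots converge instead to the finite points $(\alpha_2, \delta_1/\alpha_2)$ and $(\delta_1/\beta_2, \beta_2)$, and a parallel expansion gives the leading values of trace and determinant. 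In each limit the sign of $\det$ picks out the non-saddle $P$ via Proposition~\ref{prop1}, and the leading value of $\operatorname{tr}(J_P)$ turns out to be non-zero; hence $P$ is hyperbolic, i.e., asymptotically stable or asymptotically unstable.

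The main obstacle is the last \emph{moreover} clause, comparing the sign of $\operatorname{tr}(J_P)$ in the two limits. The plan is to follow the continuous branch $\mu \mapsto P(\mu)$, exploiting the Vieta identity $x_1^P \cdot x_1^Q = \alpha_2 \delta_1/\beta_2$ (independent of $\mu$) to relate the positions of $P$ in the two limits, and then to perform a case analysis on the signs of $\delta_2$, $C$, $\alpha_2$, $\beta_2$ and $\alpha_2\beta_2 - \delta_1$ to conclude that $\operatorname{tr}(J_P)|_{\mu \to 0}$ and $\operatorname{tr}(J_P)|_{\mu \to \infty}$ have opposite signs. I expect this sign bookkeeping to be the most delicate part of the proof.
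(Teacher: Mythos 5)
Your part (i) is essentially the paper's argument and is fine: the right-hand side depends on $(\e,\eta)$ only through the ratio, so along $\eta$ proportional to $\e$ nothing in the linearization moves. The problem is in part (ii), and it starts with how you read \eqref{csist5}. You treat the equilibria as functions of $\mu=\e/\eta$ (substituting $x_2=\delta_1/x_1$ into $\mu C(x_1-\alpha_2)(x_2-\beta_2)=\delta_2$, so that as $\mu\to 0$ one root collapses to $x_1=0$ and the other escapes to infinity). But \eqref{csist4}--\eqref{csist5} are rewritings of \eqref{csist3}, in which $\e/\eta$ multiplies the \emph{entire} second component; hence the $\dot x_2$-nullcline is $\lambda_2(x_1-\alpha_2)(x_2-\beta_2)-\delta_2=0$, independent of $\e,\eta$ (the displayed parenthesis in \eqref{csist4} is misleading). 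Consequently the two equilibria $P,Q$ and the line $\det=0$ are fixed, exactly as the statement of Proposition \ref{mainprop} presupposes when it speaks of \emph{the} non-saddle $P=(x_1,x_2)$ with $x_1\neq 0$, and as the proof of Proposition \ref{prop1} already uses. The whole content of (ii) is then the behaviour of $\operatorname{tr}J(P)=x_2^P+\mu C(x_1^P-\alpha_2)$ at the \emph{fixed} point $P$: it tends to $x_2^P$ as $\mu\to 0$ and has the sign of $C(x_1^P-\alpha_2)$ as $\mu\to\infty$, which is how the paper argues, via the line $r_1=\{\operatorname{tr}=0\}$ rotating about $(\alpha_2,0)$ from $x_2=0$ to $x_1=\alpha_2$ while $P$ and $r_2=\{\det=0\}$ stay put. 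Your asymptotics of roots of a $\mu$-dependent quadratic analyzes a different system, and along those moving branches the trace and determinant blow up, so the conclusion ``$\operatorname{tr}(J_P)$ has a non-zero leading value, hence $P$ is hyperbolic'' does not come out of your computation as stated.

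The second, independent gap is that the \emph{moreover} clause --- the actual point of (ii) --- is only announced, not proved: you say you expect a case analysis on the signs of $\delta_2, C, \alpha_2, \beta_2, \alpha_2\beta_2-\delta_1$ to show the two limiting traces have opposite signs, but no such argument is given. Note that even in the correct (fixed-equilibrium) setting this is the delicate step: it amounts to showing $x_2^P\cdot C(x_1^P-\alpha_2)<0$ at the non-saddle $P$, and this does not follow from $\det J(P)>0$ alone; the paper extracts it from the position of $P$ relative to the rotating line $r_1$ and the fixed line $r_2$. So to repair the proposal you should (a) put the factor $\e/\eta$ in front of the whole bracket, so that $P$, $Q$ and $r_2$ are parameter-independent, and (b) actually carry out the sign comparison of $\operatorname{tr}J(P)$ in the two limits, rather than deferring it.
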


\begin{proof}

\begin{figure}[t]
\centering
\includegraphics[scale=0.25]{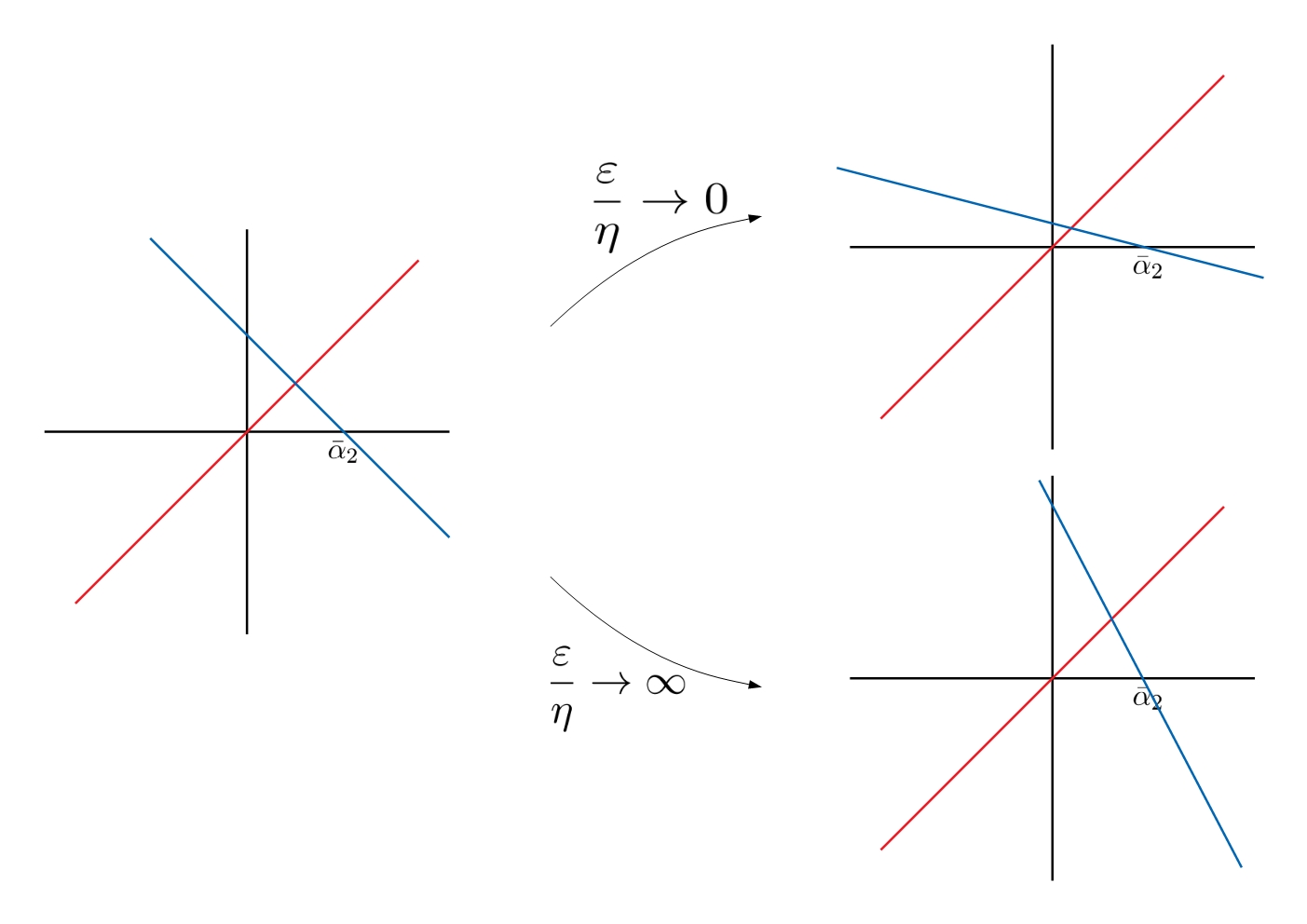}
\caption{\small Lines $r_1$ ($Tr=0$,blue), $r_2$ ($\det=0$, red) and equilibrium points $P,Q$.}
\end{figure}

The linearization of the system (\ref{csist5}) depending of ${x}_1$ and ${x}_2$ is
$$
J= \left[ \begin {array}{cc} {x}_2 & {x}_1\\ \noalign{\medskip}
\displaystyle\frac{\varepsilon C ({x}_2-{\alpha}_2)}{\eta} & \displaystyle\frac{\varepsilon C({x}_1-{\beta}_2)}{\eta} \end {array} \right] 
$$
 Note that the trace and the determinant are given, respectively, by
$$
\text{Tr}({x}_1,{x}_2)={x}_2+{\frac {\varepsilon\,C \left( {x}_1-{\alpha}_2 \right) }{\eta}}
\text{\ and \ } \det({x}_1,{x}_2)=-{\frac {\varepsilon\,C \left( {\beta}_2{x}_1- {\alpha}_2{x}_2 \right) }{\eta}}.
$$
 Tr$({x}_1,{x}_2)=0$ and $\det({x}_1,{x}_2)=0$, define  two straight lines in the plane ${x}_1{x}_2$, 
 which we denote, respectively,
by $r_1,r_2$. The line $r_2$ is independent of the parameters $\varepsilon$ and $\eta$, but 
the inclination of line $r_1$ depends
of the quotient $\varepsilon/\eta$. By Proposition \ref{prop1} 
one of these equillibrium points  is a saddle and the other depends of the line $r_1$. 
Assuming \textbf{i}, the inclination of $r_1$ does not change for $\varepsilon,\eta  \rightarrow 0$, 
that is, the nature of $P,Q$ does not  change.  Now, assume \textbf{ii} with
$\varepsilon/\eta \rightarrow 0 $. In this case $ r_1 $ tends to $y=0$, 
and the stability of $P$  depends on its position in relation to $r_1$ and $r_2$ and  remains
  for $\varepsilon $ and $\eta $ sufficiently small. If $\varepsilon /\eta \rightarrow \infty $ the position
  of $P$ does not change, but  $r_1$ tends to $y={\alpha}_2$, which implies that for sufficiently small values
  of $\varepsilon $ and $\eta $, the stability is  opposite, proving the theorem.
 
\end{proof}

Observe that, the proof of  Proposition \ref{mainprop},\textbf {i}, could be obtained using the Theorem \ref{Fenichel}. 
For only one equilibrium point, the proof is the same.

\begin{theorem}\label{teo_sliding}
Let $X$ be a $2$-cross piecewise-smooth constant vector field defined in $\R^3$ and generated by 
$X_s=(a_s,b_s,1)$ with $a_s,b_s \in \R$, $s=(s_1,s_2)\in\{-,+\}^2$. Consider  the regularization $X_{\e ,\eta}$  
given by \eqref{dreg2} and apply the blow up \eqref{csist1}.
Assume that the equilibria points of the ${x}_1{x}_2$--system \eqref{csist2}  are localized at $(-1,1)^2$. 
\begin{itemize}
\item[i.] If ${x}_1{x}_2$--system has two equilibria, then every point in $\Sigma_{00}$ is  
a sliding point of $X$, for any $\e, \eta$.
\item[ii.] If ${x}_1{x}_2$--system has only one equilibrium and it is asymptotically 
stable or unstable when $\e, \eta\rightarrow 0$, then every point in $\Sigma_{00}$ is a sliding point of $X$.
\end{itemize}
\end{theorem}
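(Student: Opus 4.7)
My plan is to exhibit, for each equilibrium $P^*\in(-1,1)^2$ of the planar $x_1x_2$-system \eqref{csist2}, an explicit invariant line in the blown-up three-dimensional system \eqref{csist1} and then pull it back through the blow-down to obtain an invariant manifold of $X_{\e,\eta}$ that collapses onto $\Sigma_{00}$. The crucial structural observation is that \eqref{csist1} decouples: the planar dynamics is autonomous in $(x_1,x_2)$ (independent of $x_3$), while $x_3'=1$ is trivial, so every planar equilibrium lifts canonically to an invariant line parametrized by $x_3$.

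Given $P^*=(P_1^*,P_2^*)\in(-1,1)^2$ satisfying both $\sum_s(1+s_1\varphi(P_1^*))(1+s_2\varphi(P_2^*))a_s=0$ and the analogous equation for $b_s$, the set $\ell^*=\{P^*\}\times\R$ is invariant under the blown-up vector field, with orbits of the form $(P_1^*,P_2^*,x_3^0+t)$. Since the blow-down $(x_1,x_2,x_3)\mapsto(\e x_1,\eta x_2,x_3)$ is a diffeomorphism for $\e,\eta>0$, its image $L_{\e,\eta}=\{(\e P_1^*,\eta P_2^*,x_3):x_3\in\R\}$ is an invariant line for $X_{\e,\eta}$; note that the equilibrium equations do not involve $\e$ or $\eta$, so $P^*$ is fixed as the parameters vary. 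For any $p=(0,0,x_3^0)\in\Sigma_{00}$, choosing $U$ to be a small open ball about $p$ and $S_{\e,\eta}:=L_{\e,\eta}\cap U$ yields a smooth $X_{\e,\eta}$-invariant submanifold of $U$; for any compact $K\subset U$, the set $S_{\e,\eta}\cap K$ lies within Hausdorff distance $\max(\e|P_1^*|,\eta|P_2^*|)\to 0$ of $\Sigma_{00}\cap K$, verifying the convergence required by Definition of sliding point.

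In case (i), Proposition \ref{prop1} supplies two distinct equilibria in $(-1,1)^2$, each of which produces such an invariant line for every $\e,\eta>0$; in case (ii), the hyperbolicity of the unique equilibrium (asymptotically stable or unstable by Proposition \ref{mainprop}) ensures that it remains isolated for all sufficiently small $(\e,\eta)$. The main obstacle I anticipate is reconciling the hypothesis, which concerns equilibria of the full regularized system \eqref{csist2}, with Propositions \ref{prop1}--\ref{mainprop}, stated for the polynomial surrogate \eqref{csist5} obtained via the near-origin linearization $\varphi(x)\approx x$. Since $\varphi$ need not be linear on $(-1,1)$, the equilibria of the two systems need not coincide; an implicit function theorem argument, using the hyperbolicity provided in parts (i) and (ii), is required to transport equilibria between the quadratic model and the exact system, after which the construction described above completes the proof.
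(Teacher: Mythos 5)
Your construction is correct, but it follows a genuinely different route from the paper. The paper's proof never exhibits the invariant objects explicitly: it invokes Proposition \ref{prop1} (one of the two equilibria is a saddle, for any $\e,\eta$) and Proposition \ref{mainprop} (persistence of the stability type of the single equilibrium along the various regularization curves) to conclude that the planar ${x}_1{x}_2$--system has stable/unstable invariant manifolds, and then asserts that these produce sliding along $\Sigma_{00}$. You instead lift the equilibrium $P^*\in(-1,1)^2$ of \eqref{csist2} itself by the trivial dynamics $x_3'=1$ to an invariant line, blow down by the diffeomorphism $(x_1,x_2,x_3)\mapsto(\e x_1,\eta x_2,x_3)$, and observe that the resulting line for $X_{\e,\eta}$ lies within $O(\max(\e,\eta))$ of $\Sigma_{00}$ because $P^*$ is independent of $(\e,\eta)$ and sits in the open box; this verifies the definition of sliding point directly and more elementarily. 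What your argument makes transparent (and the paper's does not) is that the saddle/stability hypotheses are not needed for the bare definition: any equilibrium of \eqref{csist2} in $(-1,1)^2$ suffices. What the paper's route buys is the dynamical content behind those hypotheses --- hyperbolic stable/unstable manifolds, hence actual attraction or repulsion of nearby orbits to $\Sigma_{00}$ and robustness under perturbation, which is what is exploited afterwards in Theorem \ref{teo_eqsliding}. Two small remarks: Proposition \ref{prop1} does not ``supply'' the two equilibria (they are the hypothesis of item i; the proposition only identifies one as a saddle), and your closing concern about transporting equilibria between \eqref{csist2} and the quadratic surrogate \eqref{csist5} is superfluous for your own proof, since you work with the equilibria of \eqref{csist2} directly --- it is, however, a fair criticism of the paper's use of $\varphi(x)\approx x$, which affects only the stability statements, not the existence of the invariant manifolds.
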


\begin{proof}
Let's assume first that we have two equilibria for the ${x}_1{x}_2$--system. According to the Proposition 
\ref{prop1}, one of these points is a saddle for any $\e , \eta$. Thus there are at least two invariant manifolds, 
the stable and unstable manifolds, for the ${x}_1{x}_2$--system. So, every regularizing curve produces a 
sliding along $\Sigma_{00}$, proving the first item.
Now, assume that we have only one equilibrium point and it is asymptotically stable. Assume that the
 regularizing curve is of kind $(\e, K \e)$, $K>0$. According to Proposition \ref{mainprop} item \textbf{i}, 
the equilibrium will remain asymptotically stable when $\e, \eta\rightarrow 0$ . So, we have stable invariant 
manifolds for the ${x}_1{x}_2$--system and it produces a sliding along $\Sigma_{00}$. If we have a 
regularizing curve satisfying $\e/\eta \rightarrow 0$ or $\e/\eta \rightarrow \infty$, when $\e, \eta\rightarrow 0$  
the stability of the equilibrium may change, but for $\e , \eta$ sufficiently small it is 
asymptotically stable or unstable, depending on $X$, which implies that we have a stable (unstable) 
invariant manifold for the ${x}_1{x}_2$--system and it produces a sliding along $\Sigma_{00}$. 
The conclusion is the same, if we had the equilibrium asymptotically unstable.

\end{proof}

%
%
%

\begin{theorem}\label{teo_eqsliding}
		Let $F$ and $X$ be  $2$-cross piecewise-smooth vector fields, generated by smooth vector fields  
		$F_s$  and $X_s$, $s \in \{-,+\}^2$
		 defined on a neighborhood $V \subset \R^3$ of the origin. Assume that $X_s=F_s(0)$ is a non zero constant vector filed
		 and that $F_s.x_i \neq 0$, $i=1,2$ 
		 for all $s \in \{-,+\}^2$. The sliding regions of $F$ and $X$  are the same in a neighborhood of $0$.
\end{theorem}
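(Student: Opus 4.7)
The strategy is to reduce Theorem~\ref{teo_eqsliding} to the constant-coefficient case of Theorem~\ref{teo_sliding} by observing that the slow--fast blow-up attached to $F$ is a smooth perturbation of the one attached to $X$ in the parameters $(\e,\eta)$ and in the slow variable $x_3$; the Fenichel invariant manifolds responsible for sliding will then persist and describe the same sliding set.

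First I would regularize $F$ via \eqref{dreg2}, perform the directional blow-up $x_1\mapsto\e x_1$, $x_2\mapsto\eta x_2$, and apply the time rescaling $t=\e\tau$ used throughout Section~\ref{s4}. Writing $F_s=(f_s,g_s,h_s)$, the blown-up $x_1 x_2$--system reads
\begin{equation*}
\begin{aligned}
\dot x_1 &= \sum_{s}\bigl(1+s_1\varphi(x_1)\bigr)\bigl(1+s_2\varphi(x_2)\bigr)\,f_s(\e x_1,\eta x_2,x_3),\\
\dot x_2 &= \frac{\e}{\eta}\sum_{s}\bigl(1+s_1\varphi(x_1)\bigr)\bigl(1+s_2\varphi(x_2)\bigr)\,g_s(\e x_1,\eta x_2,x_3),
\end{aligned}
\end{equation*}
coupled to $\dot x_3 = O(\e)$ and $\dot\e = \dot\eta = 0$. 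Setting $\e=\eta=0$ and $x_3=0$ replaces each $f_s,g_s$ by the components $a_s,b_s$ of $X_s=F_s(0)$, so the limit $x_1 x_2$--system coincides exactly with \eqref{csist3} for $X$. The transversality hypothesis $F_s\cdot x_i\neq 0$, $i=1,2$, guarantees that the leading coefficients (the $\lambda_j$ and related constants of \eqref{csist4}--\eqref{csist5}) remain nonzero in a neighborhood of the origin, so Propositions~\ref{prop1} and~\ref{mainprop} apply to the limit system for $F$ with conclusions identical to those for $X$.

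Second, the equilibria produced by the limit analysis are hyperbolic --- either one saddle and one non-saddle, or a single asymptotically stable or unstable point --- so by the implicit function theorem they persist as a smooth family parametrized by $(\e,\eta,x_3)$ near $(0,0,0)$ with their hyperbolic type unchanged. Theorem~\ref{Fenichel} then yields invariant manifolds $S_{\e,\eta}$ for the full blown-up system for $F$ which collapse to $\Sigma_{00}$ as $(\e,\eta)\to(0,0)$, in the same sectors of the $(\e,\eta)$ parameter space in which this occurs for $X$. Combining with Theorem~\ref{teo_sliding} applied to both systems, the sliding regions in $\Sigma_{00}$ near $0$ agree. The main obstacle I foresee is the uniformity of the Fenichel construction in the slow variable $x_3$; this should be handled exactly as in the proof of Theorem~\ref{teo1}, where one exploits that normal hyperbolicity is an open condition on the critical manifold, so the conclusion carries over to an entire neighborhood of $0\in\Sigma_{00}$.
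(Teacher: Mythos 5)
Your argument is correct in outline but follows a genuinely different route from the paper. The paper does not redo the blow-up for $F$ at all: it estimates the $C^0$ distance between the two regularizations directly, using that the regularization weights form a partition of unity, so that $\Vert F_{\e,\eta}(x)-X_{\e,\eta}(x)\Vert\leq \max_s\Vert F_s-X_s\Vert<\e$ on a small neighborhood of the origin, and then asserts that closeness of the vector fields forces closeness of their invariant manifolds, so the conclusion of Theorem \ref{teo_sliding} for $X$ transfers to $F$. Your proposal instead regularizes and blows up $F$ itself, observes that at $\e=\eta=0$, $x_3=0$ the $x_1x_2$--system degenerates exactly to \eqref{csist3} for $X=F(0)$, and then invokes persistence (implicit function theorem plus Theorem \ref{Fenichel}) of the hyperbolic structure in the parameters $(\e,\eta,x_3)$. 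What your route buys is that it makes explicit the mechanism the paper leaves implicit: mere $C^0$-closeness of vector fields does not by itself transport invariant manifolds, so the robustness really rests on normal hyperbolicity, which your argument names; the paper's route buys brevity and stays at the level of the regularized fields without repeating the blow-up computation. Two small inaccuracies in your write-up: the hypothesis $F_s\cdot x_i\neq 0$ says the first two components of each $F_s$ do not vanish, which is not the same as the nondegeneracy $\lambda_1,\lambda_2\neq 0$ of \eqref{csist4} (those are alternating sums of the components and can vanish even under transversality), so that sentence should be reworded; and in the two-equilibria case the non-saddle need not be hyperbolic (it can be a weak focus or center, cf. Section \ref{s5}), so the persistence argument should be run only for the saddle, whose stable and unstable manifolds are all that Theorem \ref{teo_sliding}(i) uses, and for the single equilibrium case under the stability hypothesis of Theorem \ref{teo_sliding}(ii).
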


\begin{proof}
Note that $F$ and $X$ are locally topological equivalent. Since $X_s=F_s(0)$ there exist a neighborhood $0\in U\subset V$ and $\e >0$, 
such that $|| F_s|U - X_s|U || < \e/2$, in the $C^0$-topology. Given a transition function $\varphi$, consider the 
regularizations $F_{\e , \eta}$ and $X_{\e, \eta}$. Note that, $\Vert F_{\e , \eta}(x) - X_{\e , \eta}(x) \Vert$ is expressed by
\begin{eqnarray*}
\begin{split}
\left \Vert\left(\left(\frac{1+\varphi\left(x/\e \right)}{2} \right)(F_+(x)-X_+(x))+\left(\frac{1-\varphi\left(x/\e \right)}{2} \right)(F_{-}(x)-X_{-}(x))\right) \right \Vert\\. 
\end{split}
\end{eqnarray*}
By definition of the transition function we have that,
\begin{equation}
\Vert F_{\e , \eta}(x) - X_{\e , \eta}(x) \Vert \leq \left(\frac{\e}{2}\right)+\left(\frac{\e}{2}\right)=\e.
\end{equation}
Since the vector fields are close, the same occurs with  the invariant manifolds of $X_{\e, \eta}$ and $F_{\e, \eta}$.
 Thus, if the regularization $X_{\e, \eta}$
satisfies the hypothesis of the Theorem \ref{teo_sliding}, $X$ has a sliding region, which implies 
that $F$ has the same  sliding region.
\end{proof}

\section{On the quadratic system (\ref{csist4})}\label{s5}

In this section, we study system  (\ref{csist4}) with parameters $\varepsilon=\eta=1$.
We rewrite it as the following

\begin{equation}
x'= A(x-a)(y-b)-B=F(x,y),\quad
y'= C(x-c)(y-d)-D=G(x,y) .\label{bsist1}
\end{equation}

Our first Theorem establishes classes of affine equivalence of system \eqref{bsist1}, depending on $A,B,C,D$.

\begin{theorem}\label{teoequiv}
System \eqref{bsist1} is affine equivalent, using a rescaling of the independent variable if necessary, to one of the following systems:
\begin{itemize}
	\item [(I)] 	($a\neq c,b\neq d$): \quad $x'= xy-B$,\quad $y'= C(x-1)(y-1)-D$. 
	\item [(II)]    ($a\neq c ,b=d$): \quad $x'= xy-B$, $y'= (x-1)y-D$.
	\item [(III)] ($a= c,b \neq d$): 	\quad $x'= xy-B$, \quad $y'= x(y-1)-D$. 
	\item [(IV)] ($a= c,  b= d,B\neq 0$): \quad $x'= xy-1$, \quad  $y'= xy-D$.
	\item [(V)] ($a= c, b= d,B=0,D\neq 0$): \quad $x'= xy$, \quad	$y'= Cxy-1$.
	\item [(VI)] ($a=c, b=d,B=D=0$): \quad $x'= xy$, \quad $y'= Cxy$.
\end{itemize}

\end{theorem}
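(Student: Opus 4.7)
The plan is to normalize system (\ref{bsist1}) by a sequence of elementary coordinate changes that preserve the factored form of its quadratic parts: translations $(x,y) \mapsto (x + t_1, y + t_2)$, independent scalings $(x,y) \mapsto (\alpha x, \beta y)$, and a time rescaling $\tau = \gamma t$. A non-diagonal linear change of variables would destroy the structure $(x-a)(y-b)$, so together with the discrete swap $(x,y) \leftrightarrow (y,x)$ and the sign reflections these are the only moves compatible with the listed normal forms. The initial translation $(x,y) \mapsto (x + a, y + b)$ sends the first quadratic factor to $xy$, producing
\[
x' = Axy - B, \qquad y' = C(x - c^*)(y - d^*) - D,
\]
with $c^* = c - a$ and $d^* = d - b$; the six cases of the theorem are then indexed exactly by which of $c^*, d^*, B, D$ vanish.

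In case (I), with $c^* \neq 0$ and $d^* \neq 0$, I would take $\alpha = 1/c^*$ and $\beta = 1/d^*$ so that $(c^*, d^*)$ is sent to $(1,1)$, and then use the time rescaling to make the coefficient of $xy$ in the first equation equal to $1$; the remaining constants reappear as the $B, C, D$ of (I). In the mixed cases (II) and (III) exactly one of $c^*, d^*$ is zero, so only one spatial scaling is pinned down by the geometry; the other spatial scaling, together with the time rescaling, is used to force the coefficients of $xy$ in both equations to equal $1$, leaving $B$ and $D$ as the free constants (the two forms being interchanged by the swap $x \leftrightarrow y$). In the degenerate case $c^* = d^* = 0$ both quadratic parts collapse to $Axy$ and $Cxy$, so one obtains $x' = Axy - B,\ y' = Cxy - D$, and the three scalings $(\alpha, \beta, \gamma)$ are used to normalize as many of $A, B, C, D$ as possible, yielding (IV), (V), or (VI) according to which of $B, D$ vanish.

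The step that needs most care is the sign accounting in cases (IV)--(VI). Writing out the normalization constraints one finds, for instance in (IV), the relations $\beta\gamma = A$, $\alpha\gamma = C$, $B\alpha/\gamma = 1$, which combine to $\gamma^2 = BC$; in (V) one similarly obtains $\gamma^2 = AD$. These are solvable over $\R$ only when the relevant product is positive, and when it is not one must compensate by a reflection $x \mapsto -x$, $y \mapsto -y$, or the time reversal $t \mapsto -t$, and check that the residual sign is absorbed into the single constant left free in the target form. Once this discrete bookkeeping is handled, the rest of the verification is a direct substitution, so the sign analysis is the only genuinely delicate part of the argument.
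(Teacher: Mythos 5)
Your reduction is the same one the paper uses: translate so that $(a,b)$ goes to the origin, use the two diagonal scalings to send $(c-a,d-b)$ to $(1,1)$ when $a\neq c$, $b\neq d$, and a time rescaling to normalize the leading coefficient, with the swap $x\leftrightarrow y$ accounting for the pair (II)/(III). The paper carries this out only for case (I) and declares the remaining cases analogous; your treatment of (I)--(III) and (VI) is a correct filling-in of that remark, since there every residual constant is free and no sign problem can arise.

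The gap is exactly at the step you yourself flag in (IV)--(V). Your normalization equations give $\gamma^{2}BC=1$ in (IV) and $\gamma^{2}AD=1$ in (V) (up to the orientation convention), so real solvability needs $BC>0$, resp.\ $AD>0$; but the claim that a reflection or a time reversal lets you absorb the bad sign into the single free constant is not true. Under $(x,y)\mapsto(\pm x,\pm y)$ and $t\mapsto -t$ the products $BC$ and $AD$ of \eqref{bsist1} (after the initial translation) are invariant, and the swap $x\leftrightarrow y$ only exchanges them, while the obstruction sits in the coefficients that forms (IV)--(V) fix equal to $+1$, not in the free constant $D$ (resp.\ $C$). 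Concretely, $x'=xy-1$, $y'=-xy$ falls under case (IV) ($a=c$, $b=d$, $B=1\neq0$) with $BC<0$ and $AD=0$, and a direct check shows it is not affine equivalent, with any real time rescaling, to any of the six listed systems: non-diagonal linear parts are excluded because they destroy the common quadratic factor $xy$, translations reintroduce linear terms, and the remaining diagonal/swap transformations all run into $\gamma^{2}=-1$. The correct normal forms need an extra sign, e.g.\ $y'=\pm xy-D$ in (IV) and $y'=Cxy\pm1$ in (V). So the ``discrete bookkeeping'' you defer does not close as stated; in fairness, the paper's own proof computes only case (I) and never confronts this, so the defect lies in the statement as much as in your argument, but your proposal asserts an absorption of signs that in general fails.
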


\begin{proof}
Consider the change of variables $ x \rightarrow u x + v $, $ y \rightarrow w y+ r $   with $u, v, w $ and $r$
arbitrary constants and get 
\begin{eqnarray}
\begin{split}
x'&= wA\left(x-\left(\frac{a-v}{u}\right)\right)\left(y-\left(\frac{b-r}{w}\right)\right)-B\\ 
y'&= uC\left(x-\left(\frac{c-v}{u}\right)\right)\left(y-\left(\frac{d-r}{w}\right)\right)-D.
\end{split}
\end{eqnarray}
We start the proof considering system (I).  Choose $v = a$ and $r = b$. Considering $t\rightarrow wAt$ system  \eqref{bsist1} becomes
\begin{eqnarray}
\begin{split}
x'={}& xy-\frac{B}{wA}\\ 
y'={}& \frac{uC}{wA} \left(x-\left(\frac{c-a}{u}\right)\right) \left(y-\left(\frac{d-b}{w} \right)\right)-\frac{D}{wA}.
\end{split}
\end{eqnarray}
Since  $a\neq c$ and $b\neq d$ we can take $u = c-a$ and  $w = d-b$. Thus we get
\begin{equation}
x'= xy-\bar{B},\quad
y'= \bar{C}(x-1)(y-1)-\bar{D},
\end{equation}
where $\bar{B}$, $\bar{C}$ and $\bar{D}$ are constant.
The proof for the other systems  is analogous.
\end{proof}

Next propositions is about the dynamics  of systems (I), (II) and (III).

\begin{proposition}
Consider the differential system (II).
\begin{itemize}
\item[i.] For $B\neq D$, the only equilibrium point is  $P=(B/B-D,B-D)$ and for $B=D=0$, all points  
$(x,0)$ with $x\in \R$ are equilibrium points .
\item[ii. ] If $D<0$ and $B=D-\sqrt{-D}$, the equilibrium point is a center.
\end{itemize} 
\end{proposition}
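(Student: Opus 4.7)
Part (i) is a direct calculation. Setting $x'=y'=0$ gives $xy=B$ and $(x-1)y=D$; subtracting yields $y=B-D$, so that $x=B/(B-D)$ provided $B\neq D$. This produces the unique equilibrium $P=(B/(B-D),\,B-D)$. In the degenerate case $B=D=0$ both equations collapse to $xy=0$ and $(x-1)y=0$, whose solutions are exactly the points $(x,0)$ with $x\in\R$.

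For part (ii) the plan has two steps. \emph{Linear step.} At an arbitrary equilibrium the Jacobian of $(F,G)$ is
\[
J=\begin{pmatrix} y & x \\ y & x-1 \end{pmatrix},\qquad \operatorname{tr} J = x+y-1,\qquad \det J = -y.
\]
Evaluating at $P$ gives $\operatorname{tr} J(P) = ((B-D)^2+D)/(B-D)$ and $\det J(P) = D-B$. Under the hypothesis $D<0$ and $B=D-\sqrt{-D}$ one has $B-D=-\sqrt{-D}$, hence $(B-D)^2=-D$, so the trace vanishes, while $\det J(P)=\sqrt{-D}>0$. Consequently the linearization at $P$ has purely imaginary eigenvalues $\pm i\,(-D)^{1/4}$ and $P$ is a \emph{linear} center. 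Note that the parameter relation $B=D\pm\sqrt{-D}$ is exactly the zero locus of the trace, and the sign is forced by the requirement $\det J(P)>0$, which rules out $B=D+\sqrt{-D}$.

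\emph{Nonlinear step: main obstacle.} The difficulty is promoting the linear center to a genuine nonlinear one, since the hypothesis only guarantees $\operatorname{tr} J(P)=0$, a codimension-one condition that a priori leaves open the possibility of a weak focus. I would use the Poincar\'e--Lyapunov procedure: let $H_2$ be the positive definite quadratic first integral of the linearized flow at $P$ (which exists precisely because $\operatorname{tr} J(P)=0$ and $\det J(P)>0$), and seek $H=H_2+H_3+H_4+\cdots$ with $\dot H\equiv 0$. Writing the linear part at $P$ as $A$ and its action on homogeneous polynomials as $\mathcal{L}_A$, this yields the cohomological equations $\mathcal{L}_A H_k = -R_{k-1}$, where $R_{k-1}$ is the homogeneous component of degree $k$ produced by the quadratic part acting on $H_2+\cdots+H_{k-1}$; the obstruction at each odd order is the corresponding focal value $V_{2j+1}(B,D)$. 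The proposition is therefore equivalent to the assertion that the algebraic curve $\{B=D-\sqrt{-D},\ D<0\}$ lies inside the Bautin center variety of family~(II). I expect the main technical step to be the explicit computation of $V_3$ and verification that it vanishes on this curve; the higher focal values should then be forced to vanish either by direct calculation or by exhibiting a structural ingredient that places the system in one of Bautin's four classes of quadratic centers (Hamiltonian, Lotka--Volterra, reversible, or $Q_4$), for instance a Darboux integrating factor of the form $\mu=(y-y^*)^{\alpha}\cdot(\text{linear})^{\beta}$ or a local time-reversing involution through $P$.
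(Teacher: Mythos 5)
Part (i) of your proposal is correct and is exactly the computation the paper dismisses as ``a simple computation''. For part (ii), your linear step also coincides with the paper's: after locating $P$, the trace is $((B-D)^2+D)/(B-D)$, which vanishes precisely when $B-D=\pm\sqrt{-D}$, and the determinant $D-B=\sqrt{-D}>0$ selects the branch $B=D-\sqrt{-D}$, giving eigenvalues $\pm i\sqrt[4]{-D}$. But from that point on you only describe a program (solve the cohomological equations, hope $V_3$ vanishes on the curve, then invoke Bautin's classification or find a reversing involution/Darboux factor); none of it is carried out, and this is exactly where the entire content of the statement lies. The paper, by contrast, finishes by an explicit computation: it translates $P$ to the origin, passes to the eigenbasis of the linearization, rescales time by $\sqrt[4]{-D}$, arrives at $\dot{x}=-y$, $\dot{y}=x-\frac{y^2}{2}+\frac{\sqrt[4]{(-D)^3}}{2D}\,xy$, and then appeals to the classical Bautin--Kapteyn center conditions. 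So, as submitted, your argument has a genuine gap at the center-versus-weak-focus step.

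Moreover, the first checkpoint of your program does not come out the way you expect, so the gap cannot be closed along the route you sketch. Writing $w=\sqrt{-D}$, the translation of $P$ to the origin gives $\dot{x}=xy-wx+(w+1)y$, $\dot{y}=xy-wx+wy$, and the linear substitution $z=x-y$ reduces this to $\dot{z}=y$, $\dot{y}=-wz+zy+y^2$. For $D=-1$, $B=-2$ (so $w=1$) this is, after the swap $(u,v)=(y,z)$, the system $\dot{u}=-v+u^2+uv$, $\dot{v}=u$, whose first Lyapunov quantity by the standard quadratic formula is $2/16=1/8\neq 0$; for general $D<0$ a Poincar\'e--Lyapunov computation with $V=\frac12(z^2+y^2)+V_3+\cdots$ (after rescaling) yields a first focal value which is a positive multiple of $(-D)^{-1/4}$. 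Hence $V_3$ does not vanish on the curve $B=D-\sqrt{-D}$, the equilibrium is a first-order weak focus rather than a center, and the hoped-for membership in Bautin's center variety is unavailable. Note that the same issue touches the paper's own proof: its displayed normal form has first Lyapunov quantity proportional to the product of the $xy$- and $y^2$-coefficients, which is nonzero, so the justification ``the coefficients of $x^2$ and $y^2$ are both zero'' does not match the displayed equation. Before investing in higher focal values you should resolve this discrepancy (for instance by checking the case $D=-1$, $B=-2$ numerically), since it concerns the validity of item (ii) itself and not merely the completeness of your write-up.
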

\begin{proof}
The proof of [(i)] follows of a simple computation. For [(ii)], use the change $x\rightarrow x+(B/B-D)$ and $y\rightarrow y+(B-D)$, and obtain 
\begin{equation}
x'= {\frac {x\,D}{\sqrt {-D}}}+x\,y-{\frac {y\,D}{\sqrt {-
D}}}+y,
\quad
y'= {\frac {x\,D}{\sqrt {-D}}}+x\,y-{\frac {y\,D}{\sqrt {-
D}}},\label{eqtipo2}
\end{equation}
with the equilibrium point now at the origin. The linearization at the origin is given by
$$
L= \left[ \begin {array}{cc} -\sqrt {-D}&-{\frac {D-\sqrt {-D}}{\sqrt {-
D}}}\\ \noalign{\medskip}-\sqrt {-D}&\sqrt {-D}\end {array} \right] 
$$
with eigenvalues $\lambda_{1,2}=\pm i\sqrt [4]{-D}$. The matrix formed by the eigenvectors is
$$
M=\left[ \begin {array}{cc} 1/2&1/2\,\sqrt [4]{-D}\\ \noalign{\medskip}0
&1/2\,\sqrt [4]{-D}\end {array} \right]. 
$$
Applying the change of variables on \eqref{eqtipo2} and the time reescaling $t\rightarrow t \sqrt[4]{-D}$, we obtain 
\begin{equation}
\dot{x}=-y,
\quad
\dot{y}=x- \frac{y^2}{2}+\frac{\sqrt[4]{(-D)^3}xy}{2D}.
\end{equation}
The coefficients of $x^2$ and $y^2$ are both equals to zero, so using the classical  Bautin's Theorem the origin is a center point.
\end{proof}

The others aspects of the dynamics of systems (II) are not difficult to study, except this one, 
when the eigenvalue is non hyperbolic. For systems (III), the same problem arise, for 
$B<0$ and $D=B-\sqrt{-B}$ the equilibrium is a center and the proof is similar.

\begin{theorem}
Consider the differential system  (I). If $B=B_1+C^2/(1+C)^2$, $D=D_1+C/(1+C)^2$ and $C\neq 1$ then  it is topologically equivalent to
\begin{equation}
x'= x,
\quad
y'= \beta_1 + \beta_2 x+ x^2 +s x y+ \mathcal{O}(|| \textbf{x}^3 ||).
\end{equation}
\end{theorem}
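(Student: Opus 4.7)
The plan is to recognize the statement as an instance of the Bogdanov--Takens bifurcation theorem applied to a two-parameter family. The perturbation parameters $B_1, D_1$ measure deviation from the critical values $B^* = C^2/(1+C)^2$ and $D^* = C/(1+C)^2$, at which I expect system (I) to possess an equilibrium with a nilpotent (double-zero) linearization. Once this is verified and the standard non-degeneracy and transversality conditions are checked, the classical Bogdanov--Takens normal form theorem (as formulated, e.g., in Kuznetsov) produces the claimed topological equivalence.

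First I would locate the critical equilibrium. Setting $xy = B^*$ and $C(x-1)(y-1) = D^*$, the symmetric ansatz $x = y$ gives $x_0 = y_0 = C/(1+C)$, and a direct substitution confirms this is an equilibrium at $(B_1,D_1) = (0,0)$. Computing the Jacobian
\[
J = \begin{pmatrix} y_0 & x_0 \\ C(y_0-1) & C(x_0-1) \end{pmatrix}
= \frac{C}{1+C}\begin{pmatrix} 1 & 1 \\ -1 & -1 \end{pmatrix},
\]
I see that $\operatorname{tr} J = 0$ and $\det J = 0$, while $J \neq 0$; hence $J$ is nilpotent of rank one, which is the Bogdanov--Takens degeneracy.

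Next I would translate $(x_0, y_0)$ to the origin via $u = x - x_0$, $v = y - y_0$, so the translated system becomes
\[
\dot u = \tfrac{C}{1+C}(u+v) + uv - B_1, \qquad
\dot v = -\tfrac{C}{1+C}(u+v) + Cuv - D_1,
\]
and then apply a linear change $\xi = u$, $\eta = u+v$ (together with the time rescaling $t \mapsto (1+C)t/C$) to put the linear part in Jordan form $\bigl(\begin{smallmatrix} 0 & 1 \\ 0 & 0 \end{smallmatrix}\bigr)$. In these coordinates the equations read, schematically, $\dot \xi = \eta + Q_1(\xi,\eta) + \text{param.}$ and $\dot \eta = -(1+C)\xi^2 + (1+C)\xi\eta + Q_2(\xi,\eta) + \text{param.}$, so the key quadratic coefficients of $\xi^2$ and $\xi\eta$ in the second equation are both proportional to $(1+C)$ and therefore non-zero. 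This gives the two non-degeneracy conditions (often called $a \neq 0$ and $b \neq 0$) required by the Bogdanov--Takens theorem.

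Finally I would verify the transversality of the unfolding: the map sending $(B_1, D_1)$ to the pair of unfolding parameters $(\beta_1, \beta_2)$ produced by the standard chain of normal-form reductions must be a local diffeomorphism at the origin. A direct computation of this Jacobian yields an expression whose non-vanishing is exactly the hypothesis $C \neq 1$; this is the step I anticipate as the main obstacle, since one has to track how $B_1$ and $D_1$ enter after the successive near-identity changes of variables and confirm that they are swept into independent directions in the $(\beta_1,\beta_2)$-plane. Once transversality is established, the Bogdanov--Takens theorem delivers the stated normal form with $s = \operatorname{sgn}\!\bigl((1+C) \cdot (1+C)\bigr) = +1$ (or the appropriate sign after the rescaling), completing the proof.
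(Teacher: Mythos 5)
Your overall strategy is the same as the paper's: locate the equilibrium $(C/(1+C),C/(1+C))$ at $B_1=D_1=0$, check that its linearization is nilpotent and nonzero, pass to coordinates in which the linear part is the Jordan block, and invoke Kuznetsov's Bogdanov--Takens theorem after checking non-degeneracy and transversality. Your equilibrium, Jacobian and translated system are correct. The genuine gap is in your verification of the non-degeneracy conditions, which is exactly where the hypothesis $C\neq 1$ has to enter. Writing the critical system as $\dot\xi=\eta+a_{20}\xi^2+a_{11}\xi\eta+\dots$, $\dot\eta=b_{20}\xi^2+b_{11}\xi\eta+\dots$, the two conditions are \emph{not} ``the $\xi^2$ and $\xi\eta$ coefficients of the second equation are nonzero''; they are $b_{20}\neq0$ and $2a_{20}+b_{11}\neq0$, because the quadratic part $Q_1$ of the first equation, which you discard, feeds into the normal-form $\xi\eta$-coefficient through the reduction $w=\dot\xi$. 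In your coordinates $a_{20}=-\tfrac{1+C}{C}$, $b_{20}=-\tfrac{(1+C)^2}{C}$, $b_{11}=\tfrac{(1+C)^2}{C}$, so $2a_{20}+b_{11}=\tfrac{(1+C)(C-1)}{C}$, which vanishes precisely at $C=1$. Thus your claim that both conditions hold because the coefficients are ``proportional to $(1+C)$'' is false at $C=1$, and your argument would prove the statement with no use of $C\neq1$, where the BT non-degeneracy actually fails. This also undermines the asserted sign: $s=\sgn\bigl(b_{20}(2a_{20}+b_{11})\bigr)=\sgn\bigl(-(C+1)^3(C-1)/C^2\bigr)$, so for instance $s=-1$ when $C>1$, not $+1$.

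In addition, the transversality condition is only announced, not proved: you ``anticipate'' that the map $(B_1,D_1)\mapsto(\beta_1,\beta_2)$ is regular exactly when $C\neq1$, but this is the other place where $C\neq1$ is used and it must be computed. The paper does it by checking that $(x,y,B_1,D_1)\mapsto\bigl(F,G,\operatorname{Tr}L,\det L\bigr)$ is regular at the origin, its Jacobian determinant being $-C(C-1)$; some computation of this kind is indispensable before citing Kuznetsov's Theorem 8.4. (For what it is worth, your $x^2$-coefficient $-(1+C)^2/C$ is the correct one; the paper's stated $b_{2,0}=-(C-1)^2/C$ looks like a slip, since its remaining coefficients agree with $-(C+1)^2/C$.)
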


\begin{proof}
System (I)  with the new parameters is 

\begin{eqnarray}
\begin{split}
x'&= xy-\left(B_1 + \frac{C^2}{(1+C)^2} \right) \\ 
y'&= C(x-1)(y-1)-\left(D_1 + \frac{C}{(1+C)^2} \right). \label{eqtipo1}
\end{split}
\end{eqnarray}
Without loss of generality, assume $C>1$. Note that, at $B_1=D_1=0$ \eqref{eqtipo1} 
has only one equilibrium given by $P=(C/(1+C), C/(1+C))$ and the linearization at $P$
$$
L= \left[ \begin {array}{cc} {\displaystyle \frac {C}{1+C}}&{\displaystyle \frac {C}{1+C}}
\\ \noalign{\medskip}-{\displaystyle \frac {C}{1+C}}&-{\displaystyle \frac {C}{1+C}}\end {array}
 \right] \neq 0
$$
has a double zero eigenvalues. Applying a translation $x-C/(1+C)$, $y-C/(1+C)$ and 
with the time reescaling $\tau_1=\tau (1+C)/C$ on \eqref{eqtipo1} we get:
\begin{eqnarray}
\begin{split}
x'&= -{\frac {-Cx\,y+B_{1}\,C-x\,C-y\,C-x\,y+B_{1}
}{C}}=F(x,y) \\ 
y'&= {\frac {x\,y\,{C}^{2}+Cx\,y-CD_{1}-x\,C-y\,C-D
_{1}}{C}}=G(x,y). \label{tr1}
\end{split}
\end{eqnarray}
Consider the map
$$
(x,y,B_1,D_1) \rightarrow \left(F(x,y),G(x,y) ,Tr(L(x,y)),\det (L(x,y)) \right)
$$
and its linearization has a determinant given by $-C(C-1)$, which is nonzero, so this map is regular for $(x,y,B_1,D_1)=(0,0,0,0)$. 

Consider the following sequence of change of coordinates:
\begin{itemize}
	\item $x\rightarrow x+\dfrac{y}{1+C}$, $y\rightarrow \dfrac{C}{1+C}y-x$;
	\item $y\rightarrow y+\left(\dfrac{B_1C-D_1}{C}\right)$, $x\rightarrow x-\left(\dfrac{\alpha_2}{2\alpha_1}\right)$,
\end{itemize}
where 
$$\alpha_1 =\dfrac{-C^3-2C^2-C}{C^2}  \text{\ and \ } \alpha_2=\dfrac{B_1C^3-C^2D_1-B_1C+D_1}{C^2},$$
we get the system
\begin{eqnarray}
\begin{split}
x'&= y \\ 
y'&=b_{0,0} +b_{0,1} y+ b_{2,0} x^2+b_{1,1} xy +y^2
\end{split}
\end{eqnarray}
where
\begin{itemize}
	\item $b_{0,0}= b_{0,0}(B_1,C,D_1)$;
	\item $b_{0,1}=\dfrac{1}{2}\left(\left( \dfrac{(C+1)^2}{C}\right)B_1 - \left(\dfrac{(C+1)^2}{C}\right)D_1 \right)$;
	\item $b_{2,0}=-\dfrac{(C-1)^2}{C}$;
	\item $b_{1,1}=\dfrac{C^2-1}{C}$.
\end{itemize}

For $B_1=D_1=0$, we have  $b_{2,0}\neq 0$ and $a_{2,0}+b_{1,1} \neq 0$ where $a_{2,0}=0$ is the coefficient of $x^2$ in the first equation.
According to  Theorem 8.4 on \cite{KZ}, system is writen in the normal form of the Bogdanov-Takens family.

\end{proof}

\section{Examples} \label{s6}
In this section we  present some examples to illustrate our main results.\\

\begin{example}\end{example}
Let
\begin{equation}
X(x,y,z)= -\sgn(x)\frac{\partial}{\partial x}-\sgn(y)\frac{\partial}{\partial y}+\frac{\partial}{\partial z} \label{example1}
\end{equation}
be $2$-cross piecewise-smooth vector field defined on $\R^3$. Given a transition function $\varphi$, consider the regularization

\begin{equation}
X_{\varepsilon, \eta}=\frac{1}{4}\sum_{s \in \{-,+\}^2} \left(1 \pm \varphi(x/\varepsilon) \right)
\left(1 \pm \varphi(y/\eta)\right) X_{s}.\label{eqreg1}
\end{equation}

\begin{figure}[!ht]
\centering
\includegraphics[scale=0.35]{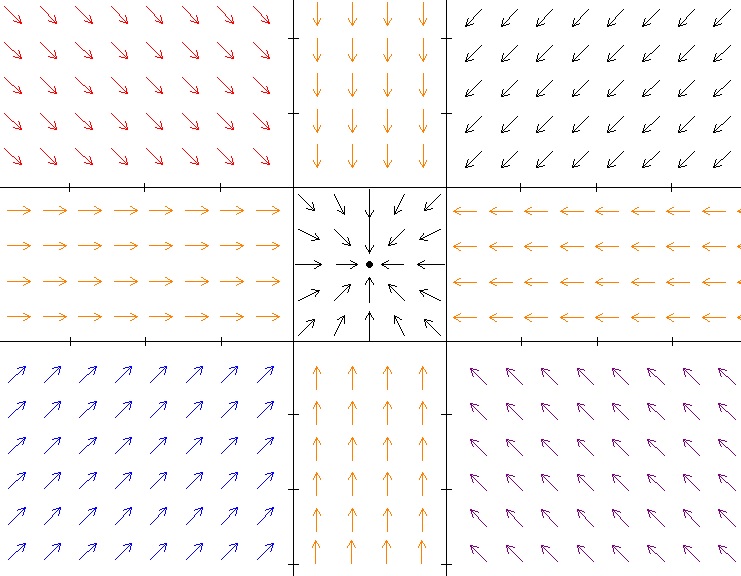}
\caption{\small Directional blow-up on the plane $z=z_0$ with vector fields $X_{++}$ in black, $X_{-+}$ 
	in red, $X_{--}$ in blue, $X_{+-}$ in purple, the vector field in orange is the convex combination 
	between $X_s$ and on the square is the $xy$-system.}
\end{figure}

We remark  that although the figures are planar, they are indicating that sliding occurs
along the entire $x_3$--axis. In fact, the third equation is $\dot{z}=1$.

Take the directional blow-up  $x\rightarrow \e x$,
$y\rightarrow \eta y$:

\begin{equation}
x'= -\frac{\varphi(x)}{\varepsilon};\quad y'= -\frac{\varphi(y)}{\eta};\quad z'= 1. \label {eqex11}
\end{equation}
For sufficiently small values of $\varepsilon,\eta$,  system (\ref{eqex11}) is topologically equivalent to
\begin{equation}
x'= -\frac{x}{\varepsilon};\quad y'= -\frac{y}{\eta};\quad z'= 1. \label{eqex12}
\end{equation}
The third equation does not affect the dynamics of \eqref{eqex12}. It's easy to see that, the only 
critical point of $xy$--system is the origin and it is asymptotically stable, for all $\e$ and $\eta$. 
So any  $p\in\Sigma_{00}$ is a sliding point and all trajectories of \eqref{eqex11} are attracted to $\Sigma_{00}$.

\begin{example}\end{example}
Let $X$ be the two parameters family of $2$-cross piecewise-smooth vector field defined on $\R^3$, given by 

\begin{eqnarray*}\small
\begin{split}
X_{++}=&\left(\frac{5}{36}-\frac{\alpha}{4},-\frac{1}{18}-\frac{\beta}{4},1\right) \ \ &X_{+-}=&\left(-\frac{13}{36}-\frac{\alpha}{4},-\frac{1}{18}-\frac{\beta}{4},1\right)\\ 
X_{--}=&\left(\frac{5}{36}-\frac{\alpha}{4},\frac{35}{18}-\frac{\beta}{4},1\right) \ \ &X_{-+}=&\left(-\frac{13}{36}-\frac{\alpha}{4},-\frac{1}{18}-\frac{\beta}{4},1\right),
\end{split}
\end{eqnarray*}
with $\alpha,\beta \in \R$.

Remember that $M_{\pm  \pm }$ are the regions where $X_{\pm  \pm }$ are defined.
Take the directional blow-up  $x\rightarrow \e x$,
$y\rightarrow \eta y$:
\begin{eqnarray}
\begin{split}
\e x'&= -\frac{4}{9}+\varphi(x)\varphi(y)-\alpha;\\ 
\eta y'&= -2\varphi(x)-2\varphi(y)+\frac{16}{9}-\beta+2\varphi(x)\varphi(y).\\
z'&= 1. \label{eqex21}
\end{split}
\end{eqnarray}
For sufficiently small values of $\varepsilon,\eta$,  system (\ref{eqex21}) is topologically equivalent to
\begin{eqnarray}
\e x'= -\frac{4}{9}+xy-\alpha;\quad \eta y'= -2x-2y+\frac{16}{9}-\beta +2xy.\label{eqex22}
\end{eqnarray}

First of all, consider $\e=\eta=1$ in order to analyze the bifurcation that appears when $\alpha=\beta=0$.
Now consider the following sequence of change of coordinates:
\begin{itemize}
	\item $x\rightarrow x+\dfrac{2}{3}$; $y\rightarrow y+\dfrac{2}{3}$;
	\item $t\rightarrow \dfrac{3}{2}t$;
	\item $x\rightarrow x+\dfrac{y}{3}$, $y\rightarrow \dfrac{2}{3}y-x$;
	\item $y\rightarrow y+\alpha-\dfrac{\beta}{2}$, $x\rightarrow x+\dfrac{1}{6}\alpha-\dfrac{\beta}{12}$.
\end{itemize}
We find

\begin{eqnarray}
x'= y,\quad y'= \mu +\nu y -\frac{9}{2}x^2+\frac{3}{2}xy+y^2,\label{eqex23}
\end{eqnarray}
where 
$$
\mu = -\frac{3}{2}\beta+\frac{9}{8}\alpha^2-\frac{9}{8}\alpha\beta+\frac{9}{32}\beta^2-\frac{3}{2}\alpha \text{\ and \ } \nu=\frac{9}{4}\left(\alpha - \frac{\beta}{2} \right).
$$
Notice that system \eqref{eqex23} is in the normal form of Bogdanov-Takens codimension $2$ bifurcation. Calculating the equilibrium points of \eqref{eqex22}, we get that for $\alpha$ and $\beta$ 
such that $36\alpha^2-36\alpha \beta+9\beta^2-48\alpha-48\beta>0$, the system have equilibrium points
(except when the both parameters are $0$, case with  only one equilibrium). Using the local expressions
 for the curves $H$ and $C$ we obtain the bifurcation diagram shown in Figure \ref{btdiag}.
\begin{figure}[!ht] 
\centering 
\includegraphics[scale=0.8]{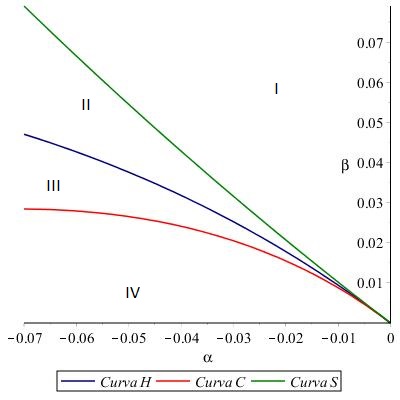}
\caption{\small Diagram bifurcation of \eqref{eqex23}.}
\label{btdiag}
\end{figure}

For the parameters in  region I there are no singularities. The curve $S$ is a curve
of generic saddle-node bifurcations, and thus there are a saddle and a repelling
equilibrium in II. From II to III we pass by the curve $H$, which denotes
a line of generic Hopf bifurcations. Consequently in III there are a saddle equilibrium
point, an attracting equilibrium point and a repelling limit cycle around the latter. The
limit cycle disappears in a (global) saddle loop bifurcation as we pass from III to
IV by the curve $C$ . Finally the attracting and the saddle equilibrium points in IV collapse
in a saddle node bifurcation as we pass back to I via S.

Consider again the parameters $\e$ and $\eta$ for the $xy$-system. 
As seen before, we have three possibilities for interactions between the parameters,
$\e/\eta = 0$, $\e/\eta = \infty$ or $\e/\eta = k$, $k>0$. For the two first interactions, 
we don't have the Bogdanov-Takens codimension $2$ bifurcation, thus the sliding will 
be decided using the Theorem \ref{mainprop}. But if we have the interaction $\e/\eta = 1$, 
the bifurcation is well defined and the phase portrait for each of the parameters in 
the regions described by Figure \ref{btdiag} remains unchanged when $\e, \eta\rightarrow 0$. 
Then, we check for which parameters there are a sliding region (positive time):
\begin{itemize}
\item If $\alpha, \beta\in I$, the $xy$-system does not have equilibria points, which implies that $X$ has no sliding region.

\item If $\alpha,\beta\in S$, the $xy$-system has a saddle-node equilibrium, attracting in the node side. 
$\Sigma_{+0}$ has only sewing region and any trajectory starting on $M_{++}$ crosses $\Sigma_{+0}$
and it is attracted to $\Sigma_{0-}$ or directly to $\Sigma_{00}$. If the trajectory intercts $\Sigma_{0-}$, it 
slides to  $\Sigma_{00}$. For a trajectory starting in $M_{-+}$, it is attracted to $\Sigma_{-0}$ and after it slides 
moving away from  $\Sigma_{00}.$  For a trajectory starting in $M_{--}$ three possibilities can occur: or it is attracted to $\Sigma_{-0}$
or to $\Sigma_{00}$ or  to $\Sigma_{0-}.$

\item If $\alpha, \beta\in II$,  $xy$-system has a saddle point and one attractive node; if
 $\alpha, \beta\in H\cup III$ $xy$-system has 
a saddle point and a focus (Hopf bifurcation), with an attracting limit cycle on the region III. 
For all cases $\Sigma_{10}$ has only sewing region and any trajectory starting on $M_{++}$ crosses
 $\Sigma_{+0}$ and it is attracted to $\Sigma_{0-}$ or directly to $\Sigma_{00}$. If the trajectory 
 intercts $\Sigma_{0-}$, it slides to  $\Sigma_{00}$. For a trajectory starting in $M_{-+}$, it is 
 attracted to $\Sigma_{-0}$ and after it slides 
moving away from  $\Sigma_{00}.$  For a trajectory starting in $M_{--}$ three possibilities can occur: 
or it is attracted to $\Sigma_{-0}$
or to $\Sigma_{00}$ or  to $\Sigma_{0-}.$ 

\item If $\alpha, \beta \in C\cup IV$  or $\alpha = \beta =0$, the trajectories of $X$ are as described in the previous case.
\end{itemize}

\begin{example}
\end{example}
Let $X$ be the $2$-cross piecewise-smooth vector field defined on $\R^3$, given by 
$X(x,y,z)=X_{\pm  \pm }(x,y,z)$ where
\begin{eqnarray*}\small
	\begin{split}
		X_{++}=&\left(\frac{259}{1800},\frac{13969}{351900}-\frac{\sqrt{13519}}{1173},1\right) \ \ &X_{+-}=&\left(-\frac{641}{1800},\frac{13969}{351900}-\frac{\sqrt{13519}}{1173},1\right)\\ 
		X_{--}=&\left(\frac{259}{1800},\frac{717769}{351900}-\frac{\sqrt{13519}}{1173},1\right) \ \ &X_{-+}=&\left(-\frac{641}{1800},\frac{59}{900}-\frac{\sqrt{13519}}{1173},1\right),
	\end{split}
\end{eqnarray*}
Give a transition function $\varphi$ consider   $t\rightarrow \e t$, $x\rightarrow \e x$ and $y\rightarrow \eta y.$ We get
\begin{eqnarray}
\begin{split}
x'&= xy-\frac{191}{450};\\ 
y'&= \frac{\e}{\eta}\left(\frac{189919}{87975}-\frac{4\sqrt{13519}}{1173}-2\left(x+y+xy\right) \right);\\
z'&= 1, 
\end{split}
\end{eqnarray}
We discuss the influence of parameters on the stability of the equilibrium points, 
which define the slide for the $2$-cross piecewise-smooth vector field $X$.There exist exactly two 
equilibrium and we already know that one is a saddle. 
Consider the non-saddle equilibrium $P=(P_1,P_2)$, where
$$\small
P_1=-\frac{74681}{15\left(21\sqrt{2}\sqrt{13519}-2100\sqrt{2}+10\sqrt{13519}-8820\right)}
$$
and
$$\small
P_2= -\frac{\sqrt{13519}}{1173} +\frac{294}{391}-\frac{7\sqrt{27038}}{3910} +\frac{70\sqrt{2}}{391} .
$$
Consider the linearization of the $xy$-system at $P$. 
\begin{itemize}
	\item[i.] For $\e/\eta \rightarrow 1$ when $\e, \eta\rightarrow 0$,  the eigenvalues $\lambda_1$ and 
	$\lambda_2$ are both negative, which implies that $P$ is an attractive node.
	\item[ii.] For $\e/\eta \rightarrow \infty$ when $\e, \eta\rightarrow 0$, the eigenvalues $\lambda_1$ 
	and $\lambda_2$ are both negative, which implies that $P$ is an attractive node.
	\begin{figure}[!htb]
		\begin{center}
			\includegraphics[height=3cm]{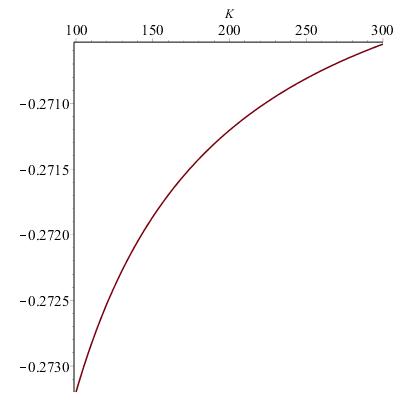} \quad
			\includegraphics[height=3cm]{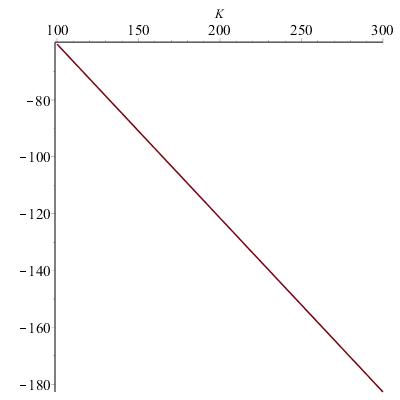}
			\caption{Values of the eigenvalues as $K=\e/\eta$ goes to $\infty$.}
		\end{center}
	\end{figure}
	
	\item[iii.] For $\e/\eta \rightarrow 0$ when $\e, \eta\rightarrow 0$  the eigenvalues $\lambda_1$ and 
	$\lambda_2$ are both positive, which implies that $P$ is a repelling node.
	\begin{figure}[!htb]
		\begin{center}
			\includegraphics[height=3cm]{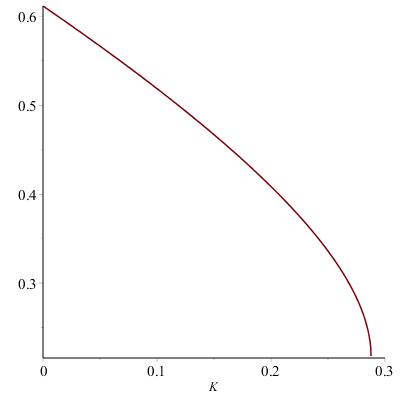} \quad
			\includegraphics[height=3cm]{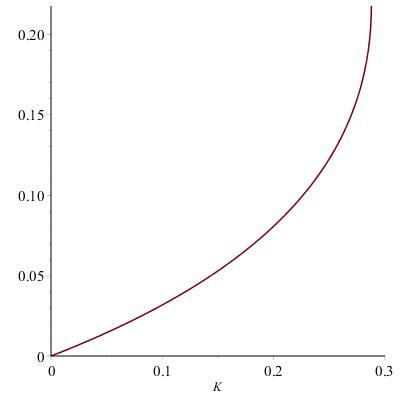}
			\caption{Values of the eigenvalues when $K=\e/\eta\rightarrow 0$.}
		\end{center}
	\end{figure}
	Observe that, this results are consisting with Proposition 4.2. Thus, the origin is a sliding point.
\end{itemize}

\begin{example}\end{example}

Let $X$ be the $2$-cross piecewise-smooth vector field defined on $\R^3$, given by 

\begin{eqnarray*}
\begin{split}
X_{++}=&\left(\frac{277}{1800},-\frac{59}{900},1\right), \ \ &X_{+-}=&\left(-\frac{623}{1800},-\frac{59}{900},1\right),\\ 
X_{--}=&\left(\frac{277}{1800},\frac{1741}{900},1\right), \ \ &X_{-+}=&\left(-\frac{623}{1800},-\frac{59}{900},1\right).
\end{split}
\end{eqnarray*}
\begin{figure}[!ht]
\centering
\includegraphics[scale=0.4]{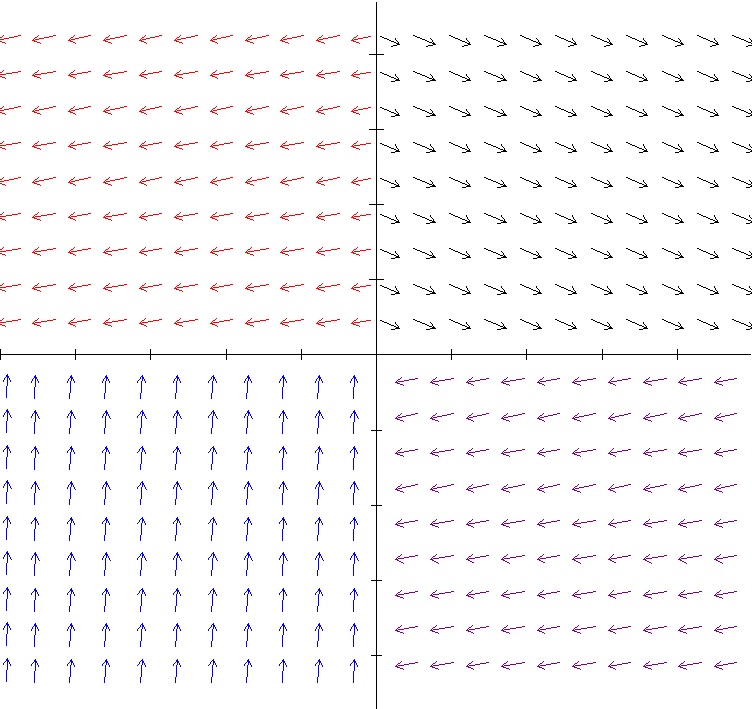}
\caption{\small Sketch of the phase portrait of $X$ on the plane $z=z_0$ with vector fields $X_{++}$ 
	in black, $X_{-+}$ in red, $X_{--}$ in blue and $X_{+-}$ in purple.}
\end{figure}
$X$ is obtained  from the previous example, with $\alpha=-6/100$ and $\beta=4/100$ (region III). 
Consider the initial condition $P_1=(0.5,0.5).$ 
The trajectory of $X_{++}$ starting in $P_1$ is
\begin{equation}
x(t) = (277/1800)t+1/2, y(t) = -(59/900)t+1/2.
\end{equation}
 After $t=450/59$, the trajectory reaches $\Sigma_{+0}$ at $P_2=(395/236,0)$ and as seen before, 
 $\Sigma_{+0}$ is a sewing region. 
 So we take the  trajectory of $X_{+-}$  by $P_2$ when $t=450/59$ 
\begin{equation}
x(t) = -(623/1800)t+509/118, y(t) = -(59/900)t+1/2.
\end{equation}

After $t=458100/36757$ the solution reaches $\Sigma_{0-}$, which is a sliding manifold. Calculating 
the Filippov sliding vector field on $\Sigma_{0-}$,
$$
X_{0-}^{sl}= \left[ \begin {array}{c} 0\\ \noalign{\medskip}{\dfrac {1187}{900}}
\end {array} \right] 
$$
and its trajectory by  $P_3=(0,-395/1246)$ at $t=458100/36757$, we get
\begin{equation}
x(t) = 0, y(t) = (1187/900)t-1977/118.
\end{equation}
Finally, after $t=\frac{1779300}{140066}$ the solution reaches and slides on $\Sigma_{00}$ as expected.
\begin{figure}[!ht] 
\centering 
\includegraphics[scale=0.4]{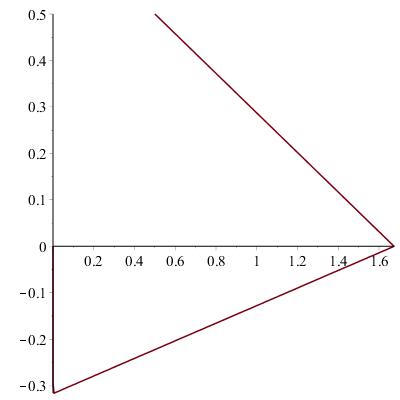}
\caption{\small Plot of the piecewise solution starting on $(0.5,0.5)$.}
\end{figure}

The straightline 
$x \left( t \right) =\frac {277t}{1800}-\frac {277}{3482}$, $y \left( t \right) =\frac {1741t}{900}-1$
located in the third quadrant, Figure \ref{hachu}, delimits the attraction basin of  $\Sigma_{00}$ 
(hatched region) and of  $\Sigma_{-0}$ (no hatched region).

\begin{figure}[!htb]
	\begin{center}
		\includegraphics[height=6cm]{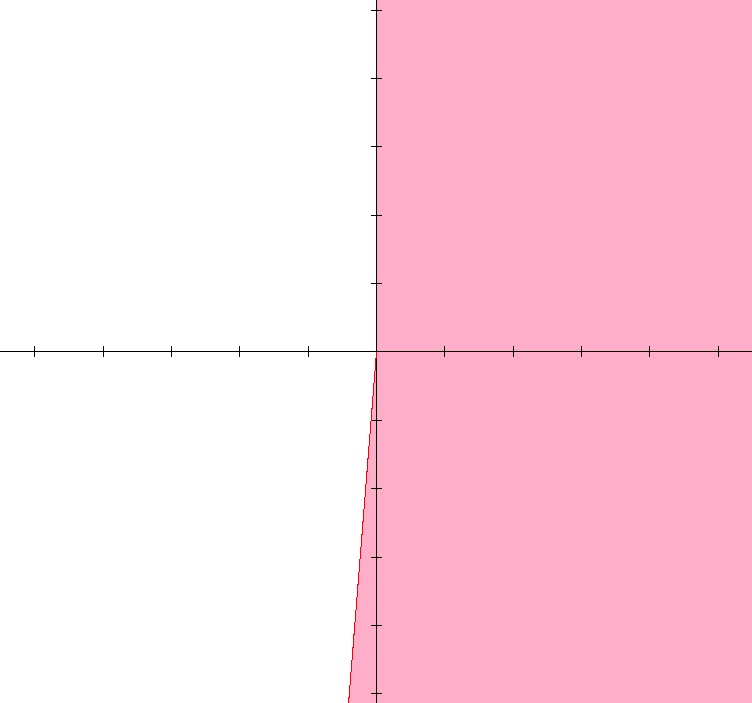}
		\end{center}\caption{\small Attraction basin of $\Sigma_{00}$ (hatched region) and of 
		 $\Sigma_{-0}$ (no hatched region).}
\label{hachu}\end{figure}

\section{Acknowledgments} The authors are partially supported by CAPES and FAPESP.
The authors are grateful for the suggestions and comments of Daniel Cantergiani Panazzolo and for the hospitality of LMIA-UNIVERSITÉ DE HAUTE-ALSACE.

\newpage

\end{document}